\setlist[enumerate]{leftmargin=*,font=\upshape,align=parleft,label=(\alph*)}
\setlist[itemize]{leftmargin=*,labelwidth=*}
\renewcommand\subsection{\@startsection{subsection}{2}%
	\z@{-1.5em}{.7em}%
	{\noindent\bfseries}}
\numberwithin{equation}{section}
\newtheorem{lemma}[equation]{Lemma}
\crefname{prop}{Proposition}{Propositions}
\newtheorem{prop}[equation]{Proposition}
\newtheorem{theorem}[equation]{Theorem}
\crefname{obs}{Observation}{Observations}
\newtheorem{obs}[equation]{Observation}
\crefname{cor}{Corollary}{Corollaries}
\newtheorem{cor}[equation]{Corollary}
\def\@empty{}
\def\ifemptycredit#1{%
\def\tmp{#1}%
\ifx\tmp\@empty%
\else%
{~(#1)}%
\fi%
}
\newenvironment{namedthm*}[2][]{
\par\noindent \textbf{#2}\ifemptycredit{#1}\textbf{.}\itshape\xspace
}{}
\theoremstyle{definition}
\crefname{defn}{Definition}{Definitions}
\newtheorem{defn}[equation]{Definition}
\crefname{example}{Example}{Examples}
\newtheorem{example}[equation]{Example}
\crefname{problem}{Problem}{Problems}
\newtheorem{problem}[equation]{Problem}
\theoremstyle{remark}
\crefname{remark}{Remark}{Remarks}
\newtheorem*{notation*}{Notation}
\renewcommand{\phi}{\varphi}
\newcommand*{\defeq}{\mathrel{\vcenter{\baselineskip0.5ex \lineskiplimit0pt \hbox{\scriptsize.}\hbox{\scriptsize.}}}=}
\newcommand{\set}[1]{\left\{ #1 \right\}}
\newcommand{\floor}[1]{{\lfloor #1 \rfloor}}
\newcommand{\ceil}[1]{{\lceil #1 \rceil}}
\definecolor{bleu-fonce}{RGB}{90,94,176}
\definecolor{green-fonce}{RGB}{90,176,94}
\definecolor{actual-purple}{RGB}{140,20,160}
\title{Pairs of disjoint matchings and related classes of graphs}
\author{Huizheng Guo}
\author{Kieran Kaempen} 
\author{Zhengda Mo}
\author{Sam Qunell}
\author{Joe Rogge}
\author{Chao Song}
\author{Anush Tserunyan}
\author{Jenna Zomback}
\thanks{This work is part of the research project ``Pairs of disjoint matchings'' within \href{https://math.illinois.edu/research/igl}{Illinois Geometry Lab} in Spring 2019 -- Spring 2020. The first, second, third, fourth, fifth, and sixth authors participated as undergraduate scholars, the eighth author served as graduate student team leader, and the seventh author as faculty mentor. The seventh author was supported by the NSF Grant DMS-1855648.}
\date{\today}
\begin{document}

\maketitle

\begin{abstract}
    For a finite graph $G$, we study the maximum $2$-edge colorable subgraph problem and a related ratio $\frac{\mu(G)}{\nu(G)}$, where $\nu(G)$ is the matching number of $G$, and $\mu(G)$ is the size of the largest matching in any pair $(H,H')$ of disjoint matchings maximizing $|H| + |H'|$ (equivalently, forming a maximum $2$-edge colorable subgraph). Previously, it was shown that $\frac{4}{5} \le \frac{\mu(G)}{\nu(G)} \le 1$, and the class of graphs achieving $\frac{4}{5}$ was completely characterized. We show here that any rational number between $\frac{4}{5}$ and $1$ can be achieved by a connected graph. Furthermore, we prove that every graph with ratio less than $1$ must admit special subgraphs.
    

\end{abstract}

\section{Introduction}

We investigate the ratio $\frac{\mu(G)}{\nu(G)}$ of two parameters of a finite graph $G$, where $\nu(G)$ is the size of a maximum matching in $G$, and $\mu(G)$ is the size of the largest matching in a pair of disjoint matchings whose union is as large as possible. More formally,
\begin{align*}
\lambda(G) 
\defeq& 
\max\set{|H|+|H'| : \text{$H$ and $H'$ are disjoint matchings in $G$}},
\\
\mu(G)
\defeq&
\max\set{|H|:\text{$H$ and $H'$ are disjoint } \text{matchings in $G$ with }|H|+|H'|=\lambda(G)}.
\end{align*}

One motivation for studying the ratio $\frac{\mu(G)}{\nu(G)}$ is the problem
of covering as many edges of $G$ as possible by $k$ matchings, which is known as the Maximum $k$-Edge-Colorable Subgraph problem.
Let $\nu_k(G)$ denote the maximum number of edges of $G$ that can be covered by
$k$ matchings. When $k = 1$, it is clear that
one just takes a maximum matching, so $\nu_1(G) = \nu(G)$.
We consider the problem with $k = 2$, as $\nu_2(G) = \lambda(G)$.

\subsection{Computational Considerations}


Recall that a proper edge coloring of a graph $G$ is an assignment of a color to
each edge of $G$ such that no two adjacent edges have the same color.
In a proper edge coloring, the set of edges with a given color is by definition
a matching, so a proper edge coloring with $k$ colors corresponds exactly
to covering $G$ with $k$ matchings. We can therefore reframe $\nu_k$ to be
the maximum size in edges of a subgraph of $G$ that is $k$-edge-colorable.

Edge coloring is a computationally complex problem; in \cite{Holyer},
I. Holyer showed that the problem of determining whether a graph admits a $3$-edge coloring is $NP$-complete even in the case of cubic graphs. Note that a cubic graph is $3$-edge colorable if and only if
$\lambda$ is equal to the number of vertices, and so Holyer's result is equivalent to the $NP$-completeness of the problem of determining whether $\lambda$ is equal to the number of vertices.

Furthermore, U. Feige, E. Ofek, and U. Wieder showed in \cite{F-O-W:02} that
for all $k \ge 2$, the $k$-edge-colorable subgraph problem (computing $\nu_k$)
is $NP$-hard (note that there are polynomial time algorithms computing $\nu_1$,
the first being due to J. Edmonds in \cite{Edmonds:1965}).
Therefore, unless $P=NP$, we cannot compute $\lambda(G)$ in polynomial time.
In \cite{A-K-S-S-T:2020}, however, A. Agrawal et. al show that computing
$\lambda(G)$ is fixed-parameter tractable.

A greedy algorithm to compute $k$ matchings realizing $\nu_k(G)$
could first take a maximum matching of
the whole graph, then recursively take a maximum matching of the remaining edges until $k$ matchings have been computed. However, this may not yield a cover with maximum number of edges $\nu_k(G)$; see \cref{fig:spanner} for a minimal example. The parameter $\frac{\mu(G)}{\nu(G)}$ measures the failure of the optimality of this greedy algorithm for $k = 2$.

\begin{figure}[H]
    \centering
    \includegraphics[width=0.5\textwidth]{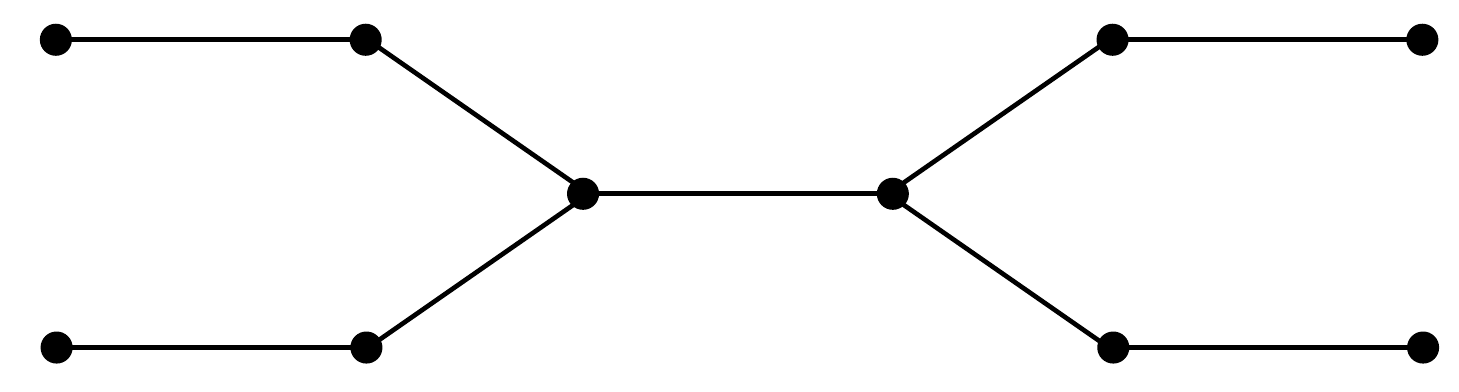}
    \caption{The spanner graph.}
    \label{fig:spanner}
\end{figure}

\subsection{Numerical Bounds}

In \cite{Mkrtchyan:2008qr}, V. Mkrtchyan, V. Musoyan, and A. Tserunyan proved that
\[ 
\frac{4}{5} \leq \frac{\mu(G)}{\nu(G)} \leq 1
\] 
for any graph $G$, and A. Tserunyan, in \cite{Tserunyan:2009qr}, characterized
all graphs that achieve the ratio $ \frac{4}{5} $---the characterization is
somewhat involved, see \cref{app:char} for a brief exposition. Figure
$\ref{fig:spanner}$
shows a picture of the \textbf{spanner}, the unique minimal graph that achieves
the ratio $ \frac{4}{5}$.

In \cite{A-M-P-V:2014} D. Aslanyan et. al. show that
in the special case of cubic graphs, 
\[ 
\frac{8}{9} \leq \frac{\mu(G)}{\nu(G)} \leq 1.
\] 

\noindent Furthermore, in \cite{M-P-V:10}, it is shown that in cubic graphs, the
following inequalities hold:
\begin{align*}
    \frac{4}{5} | V(G) | \le \lambda(G) \le \frac{|V(G)| + 2\nu_3(G)}{4}.
\end{align*}

\noindent In this vein, L. Karapetyan and V. Mkrtchyan prove in \cite{K-M:19} the following inequality: for a bipartite graph $G$, for all $0 \le i \le k$,
\begin{align*}
    \nu_k \ge \frac{\nu_{k-i}(G) + \nu_{k+i}(G)}{2}.
\end{align*}

\subsection{Our Results}

The present paper deals with graphs achieving $\frac{\mu(G)}{\nu(G)}>\frac{4}{5}$.
In \cref{sec:ratios}, we prove that for a connected graph $G$, the ratio $\frac{\mu(G)}{\nu(G)}$ achieves any rational number within our bounds. In fact:

\begin{theorem}\label{achieving_every_rational}
    For any positive integers $ m, n$ such that $ \frac{4}{5} \leq \frac{m}{n} < 1 $, there is a connected graph $ G $ with $ \mu(G) = m $ and $ \nu(G) = n $.
\end{theorem}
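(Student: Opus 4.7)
Given $m, n$ with $\tfrac{4}{5} \le \tfrac{m}{n} < 1$, set $a \defeq n - m$ and $b \defeq 5m - 4n$; the hypothesis ensures $a \ge 1$ and $b \ge 0$, and both are integers. The first move is to consider the disjoint union $G_0$ of $a$ copies of the spanner $S$ (for which $\mu(S) = 4$ and $\nu(S) = 5$) together with $b$ copies of $K_2$. Since $\mu$ and $\nu$ are additive over connected components, $\mu(G_0) = 4a + b = m$ and $\nu(G_0) = 5a + b = n$. So $G_0$ already has the right parameters, but it is disconnected.

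The remaining task is to merge the components of $G_0$ into a single connected graph while preserving both $\mu$ and $\nu$. My plan is to select, in each spanner copy, a vertex that is ``non-essential'' for both the matching problem and the two-disjoint-matchings problem: some maximum matching of $S$ avoids the vertex, and some optimal pair $(H, H')$ of disjoint matchings with $|H| = \mu(S) = 4$ has $H$ avoiding that vertex. Since $|H| = 4$ covers only $8$ vertices and $|V(S)| \ge 10$ (because $\nu(S) = 5$), at least two vertices of $S$ are missed by $H$; a direct inspection of the spanner shows that one can further arrange for such a vertex to miss some maximum matching. I then connect the components into a tree-like configuration by identifying these non-essential vertices (and one endpoint of each $K_2$), or, equivalently, by attaching the components via pendant edges meeting at a new hub vertex.

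\textbf{The key obstacle} is verifying that this gluing does not change $\mu$ or $\nu$. In principle, the bridging edges can lie on augmenting paths that increase $\nu$, or enable new pairs of disjoint matchings that shift $\mu$. To rule this out, I plan to prove the following restriction claim: every maximum matching of the glued $G$ restricts to a maximum matching on each original component, and likewise every optimal pair of disjoint matchings of $G$ restricts to an optimal pair on each original component. The non-essentiality of the glue vertices then forces $\nu(G) = \nu(G_0) = n$ and $\mu(G) = \mu(G_0) = m$. The technical heart of the argument is a finite structural analysis of the spanner — pinning down the jointly-non-essential vertices and performing a case analysis on augmenting paths and alternating cycles that traverse the hub — and this is where most of the work will lie.
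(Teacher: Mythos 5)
Your starting configuration $G_0$ is fine: $\mu$ and $\nu$ are additive over connected components, and $4a+b=m$, $5a+b=n$ checks out. The gap is in the gluing step, and it is not a technicality: the ``jointly non-essential'' vertex you need does not exist. The spanner has a perfect matching, and in fact a unique one (the leaf edges are forced into any perfect matching, which then forces the central edge; see \cref{Matching}), so every maximum matching of $S$ has $5$ edges on $10$ vertices and is therefore perfect. No vertex of the spanner is missed by any maximum matching, so the claim that ``direct inspection'' produces one is false. Consequently, identifying a vertex of one spanner with a vertex of another yields $19$ vertices and $\nu \le 9 < 5+5$, so vertex identification strictly destroys $\nu$. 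The hub-with-pendant-edges variant is not equivalent to identification (it adds vertices), and while it happens to preserve $\nu$, it fails on the other parameter, as follows.

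The second half of your non-essentiality condition points the wrong way. You ask for an optimal pair $(H,H')$ in which $H$ misses the glue vertex; but that is precisely the hypothesis under which attaching a new edge there \emph{increases} $\lambda$ (this is the content of \cref{new_edge}). Concretely, for two spanners joined to a hub $u$ by pendant edges at leaves $y,y'$ with $y$ missed by $H_1$ in an optimal pair of the first spanner and $y'$ missed by $H_2'$ in an optimal pair of the second (both exist by \cref{Lambda}), one gets $\lambda(G) \ge 16+2 = 18$, hence $\mu(G) \ge \lceil 18/2 \rceil = 9 > 8 = m$: the deficit you were counting on disappears. The condition you actually need is the opposite one: glue at vertices that are saturated by \emph{both} $H$ and $H'$ in \emph{every} optimal pair, so that the bridge edge can never enter an optimal pair and $\lambda$, $\mu$ remain additive. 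This is exactly what the paper does (\cref{Saturated} and \cref{Components}), joining spanners by edges between central vertices. Your $K_2$ components fail this saturation condition as well (on $K_2$ the second matching is empty), which is why the paper instead absorbs the surplus $b=5m-4n$ by adjoining $b$ extra legs to a central vertex of one spanner, for which $\mu = b+4$ and $\nu = b+5$ are computed directly.
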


\begin{figure}[H]
            \centering
            \hspace*{\fill}
            \begin{subfigure}[b]{0.3\textwidth}
		     \includegraphics[scale=0.25]{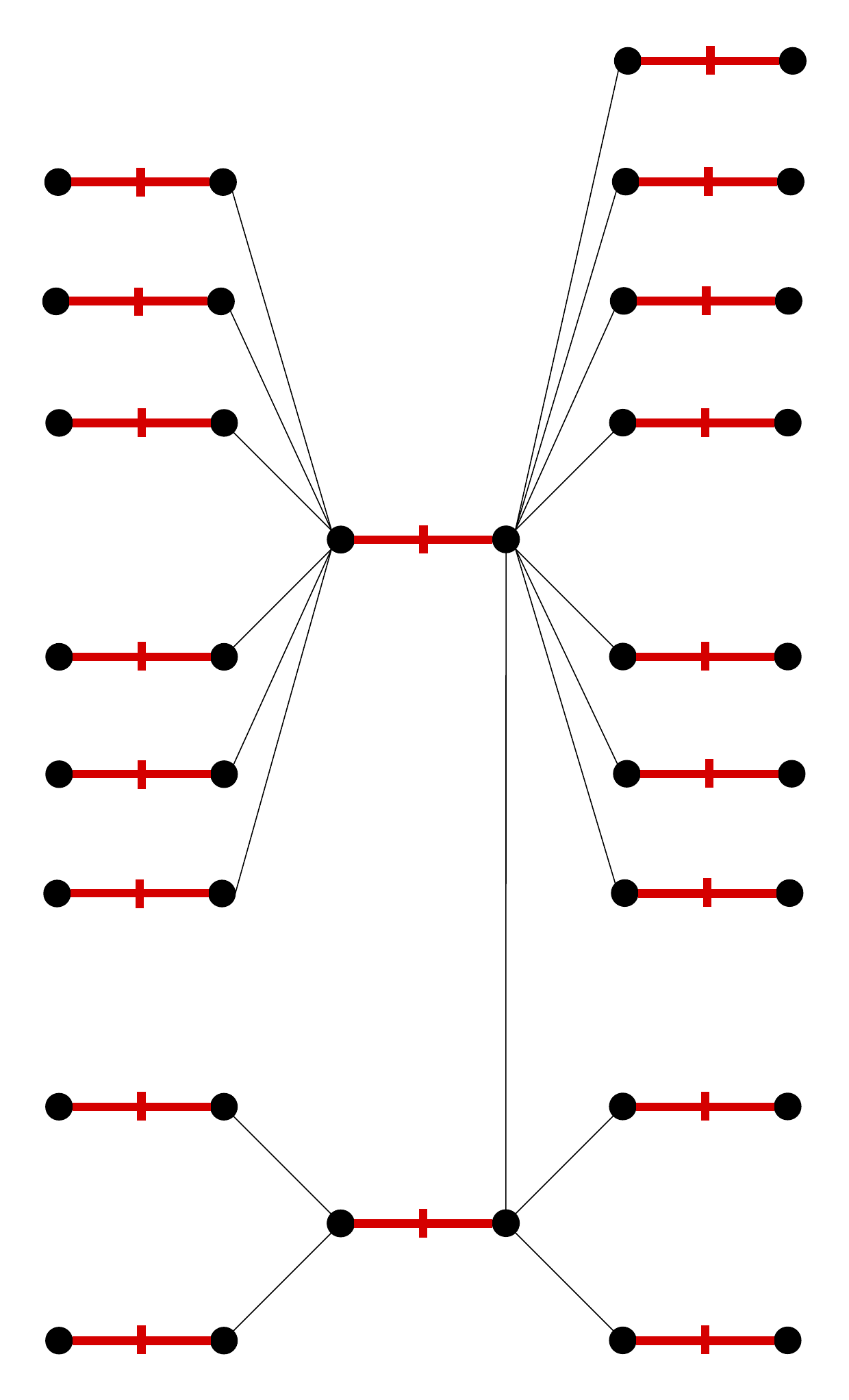}
		     \caption{A perfect matching.}
		     \label{fig:span_perf}
            \end{subfigure}
            \hfill
            \begin{subfigure}[b]{0.3\textwidth}
		     \includegraphics[scale=0.25]{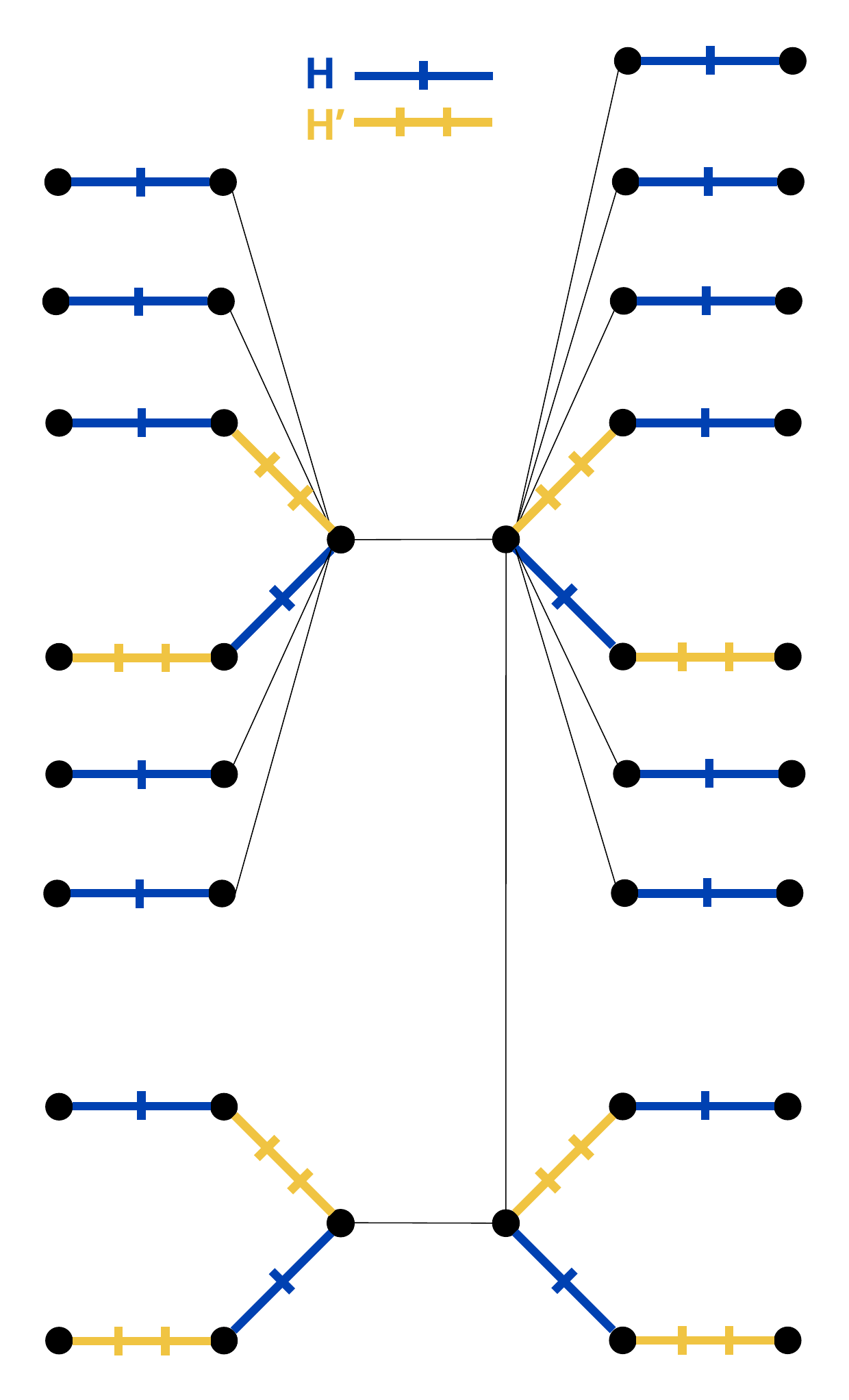}
		     \caption{A pair $(H, H')$.}
            \end{subfigure}
            \hspace*{\fill}
		     \caption{A graph $G$ with $\mu(G)=17$ and $\nu(G)=19$.}
\end{figure}

In \cref{sec:ratio_one}, we prove the following necessary structural condition for graphs with $\frac{\mu(G)}{\nu(G)}<1$.

\begin{theorem}\label{admitting_diamond_spanners}
If a finite connected graph $G$ has $\frac{\mu(G)}{\nu(G)}<1$, then it
contains a diamond spanner as a subgraph $($see \cref{fig:forbidden},
\cref{def:diamond_spanner}$)$.
\end{theorem}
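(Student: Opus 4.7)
Let $M$ be a maximum matching of $G$, of size $\nu(G)$, and fix a pair of disjoint matchings $(H,H')$ realizing $\lambda(G)$ with $|H|=\mu(G)$. Since $\mu(G)<\nu(G)$ by hypothesis, $|H|<|M|$, and by Berge's theorem there is an $H$-augmenting path $P\subseteq G$. The crucial consequence of $\lambda$-maximality is that $P$ cannot be disjoint from $H'$: otherwise $(H\triangle P,\,H')$ would be a pair of disjoint matchings with $|H\triangle P|+|H'|>\lambda(G)$, a contradiction. Moreover, since $|H'|\le|H|<\nu(G)$, the same reasoning applied to $H'$ yields an $H'$-augmenting path that must meet $H$ in an edge.

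The plan is to extract a diamond spanner from the combined structure of $P$, $H$, and $H'$. I would choose $P$ minimally --- shortest first, and among shortest those with the fewest edges in $H'$ --- and analyze each crossing point $e\in P\cap H'$. The alternating pattern of $P$ together with $\lambda$-maximality (one cannot locally swap to strictly enlarge $|H|+|H'|$, nor to shift edges between $H$ and $H'$ while increasing $|H|$) should force additional edges around $e$ closing up into a diamond ($K_4$ minus an edge): specifically, the two endpoints of $e$ together with their $P$-neighbors should span four mutually adjacent vertices missing exactly one edge. Running the same analysis along a carefully chosen $H'$-augmenting path $P'$ in parallel, the local diamond gadgets should chain together along the alternating structure into the backbone of a spanner.

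The main obstacle is passing from these local diamond gadgets to a \emph{global} diamond spanner subgraph. One must rule out configurations where $P$ or $P'$ close up early, or where the two augmenting paths overlap in a way that produces only a short spanner fragment rather than the full backbone. I would handle this by strengthening the minimality of $P,P'$ (e.g., lexicographically minimizing length and the $H'$-intersection and $H$-intersection simultaneously) so that any such shortcut would contradict the choice; the connectivity of $G$ together with the rigidity already established for the $\tfrac{4}{5}$ case in \cite{Tserunyan:2009qr} should then propagate the local structure outward. Once these invariants are in place, the diamond spanner should be read off the alternating structure of $P\cup P'\cup H\cup H'$ directly, without a heavy case analysis on crossings.
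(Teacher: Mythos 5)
There is a genuine gap: your write-up is a plan rather than a proof, and the plan omits the two ideas that actually carry the paper's argument. First, the paper does not work with an $H$-augmenting path and minimality of $P$; it works with a \emph{maximum intersection triple} $(M,H,H')$ --- a maximum matching $M$ and an optimal pair chosen to maximize $|M\cap(H\cup H')|$ and then $|M\cap H|$ --- and with maximal $M$-$H$ alternating paths. Every local rigidity statement you hope for (``$\lambda$-maximality should force additional edges around $e$'') is derived in the paper from swap arguments against \emph{these} extremal choices (Lemma \ref{augpaths}): end edges of $P$ lie in $M\cap H'$, every interior vertex is incident to $H'$, $P$ contains an $M\setminus H'$ edge, and certain even cycles alternating between $M$ and $H\cup H'$ are forbidden. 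Your proposed lexicographic minimization of $P$ and $P'$ (shortest, fewest $H'$-edges) is a different device and does not obviously yield any of these facts; in particular, nothing in your setup forces the $H'$ edges and the extra $M$ edges hanging off $P$ that become the legs and degree-one tips of the spanner. Second, your concrete local claim --- that the endpoints of a crossing edge $e\in P\cap H'$ together with their $P$-neighbors ``span four mutually adjacent vertices missing exactly one edge'' --- is not forced by any swap and is not how the diamonds arise: in the paper the diamonds come from pairs of $H'$ chords joining path vertices at distance $2$, and the spanner ends come from $H'$-then-$M$ paths leaving $P$.

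The real crux, entirely absent from your proposal, is the \emph{barrier} argument (Lemma \ref{barriers} plus the main induction): one must locate an $M\setminus H'$ edge $e$ of $P$ such that no $H'$ edge or $H'$-$M$-$H'$ path links the left side of $e$ to the right side; only for such an $e$ does the local analysis close up into a full diamond spanner on both sides. Proving such a barrier exists requires the forbidden-even-cycle lemma and a termination argument (repeatedly choosing the furthest-reaching link among finitely many $M\setminus H'$ edges). Your fallback --- invoking ``the rigidity already established for the $4/5$ case'' and connectivity to ``propagate the local structure outward'' --- does not substitute for this: the characterization of ratio-$\frac45$ graphs says nothing about graphs with ratio strictly between $\frac45$ and $1$, which is exactly the regime this theorem must handle.
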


\begin{figure}[H]
            \centering
		     \includegraphics[scale=.5]{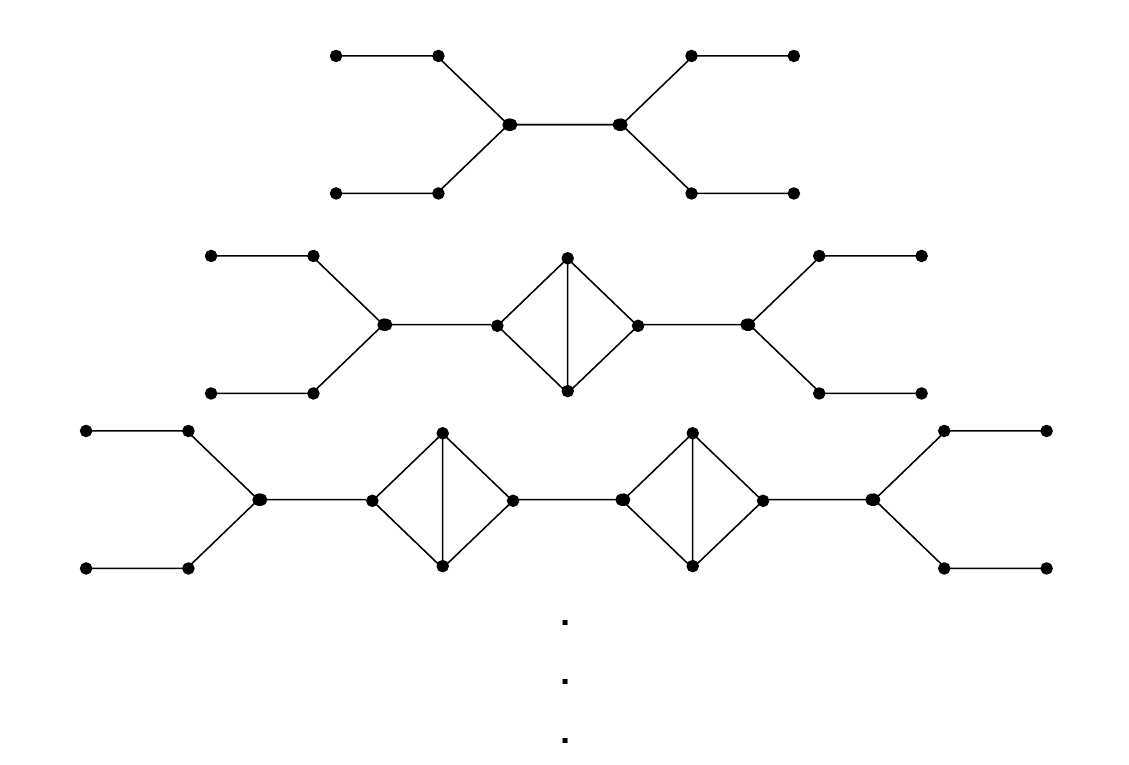}
		     \caption{Diamond spanners with 0, 1, and 2 diamonds.}
             \label{fig:forbidden}
\end{figure}

Unlike the lower bound $\frac{4}{5}$, we still do not know exactly for which graphs $G$ the ratio $\frac{\mu(G)}{\nu(G)}$ achieves the upper bound $1$. More precisely, the following is still open:

\begin{problem}
Give a structural characterization of the class of all finite connected graphs $G$ with $\frac{\mu(G)}{\nu(G)} = 1$.
\end{problem}

\section{Definitions and notation}\label{sec:defs}

Here, we define the main notions we use, referring to \cite{West:graph_theory} for basic graph-theoretic terminology.

By a \textbf{graph}, we mean a pair $ G \defeq (V, E)$, where $ V $ is a set of vertices, and $ E $ a set of edges, i.e. $ E \subseteq V^2 $. We will only be considering finite, simple, undirected graphs with no loops, i.e., $(v, v) \notin E$ and $ (u, v) = (v, u) $ for all $u,v \in V$. We identify the corresponding undirected edge by $uv$. A \textbf{matching} in $G$ is a set of edges such that no two are adjacent. A \textbf{perfect matching} is a matching $M$ that covers every vertex, and a \textbf{near perfect matching} is a matching $M$ such that for every vertex $v$ in $G$ except for one, there exists an $e\in M$ such that $v$ is incident to $e$.

Below we define the parameters that we will investigate throughout the
rest of the paper, as well as some additional definitions:



\noindent$\boldsymbol{\lambda(G)} \defeq \max\{|H|+|H'|:(H,H') \text{ are disjoint matchings on } G\}$.
\\
$\boldsymbol{\Lambda(G)}$ is the set of pairs $ (H, H') $ of disjoint
matchings satisfying $|H| + |H'| = \lambda(G)$.
\\
$\boldsymbol{\mu(G)} \defeq \max\{|H| : \exists H' \text{ such that }
(H, H') \in \Lambda(G)\}$.
\\
$\boldsymbol{\Lambda_\mu(G)}$ is the subset of $\Lambda(G)$ of pairs $(H,H')$ with $|H| = \mu(G)$.
\\
$\boldsymbol{\mu'(G)} \defeq \lambda(G) - \mu(G)$.
\\
If $A, B\subseteq E$, then an \textbf{$A, B$ alternating path} is a path such that alternating edges are in $A\setminus B$ and the others in $B\setminus A$. We will frequently refer to $M, H$ alternating paths for the $M$ and $H$ described above.
\\
A path/cycle is \textbf{even} (\textbf{odd}) if it has even (odd) number of edges.
\\
A \textbf{central vertex} of a tree is a vertex with degree larger than 2.
\\
A \textbf{leg} is a copy of $ P_2 $ (the path graph of length $2$, i.e. with two edges). We say $\hat{G}$ is obtained from $G$ by \textbf{adjoining a leg} to a vertex $v$ of $G$ if $\hat{G}$ is obtained by taking the union of $G$ and $P_2$ and identifying one end vertex of $P_2$ with the vertex $v$. 

\noindent Recall that \textbf{spanner} is the tree depicted in \cref{fig:spanner}.
A \textbf{$ \boldsymbol{k} $-spanner} is a tree obtained from the spanner
with $ k $ additional legs adjoined to either of the central vertices, in any combination. Note that the spanner is a 0-spanner. \cref{fig:3spanner} depicts a $3$-spanner.

\begin{figure}[h]
    \centering
    \includegraphics[width=0.5\textwidth]{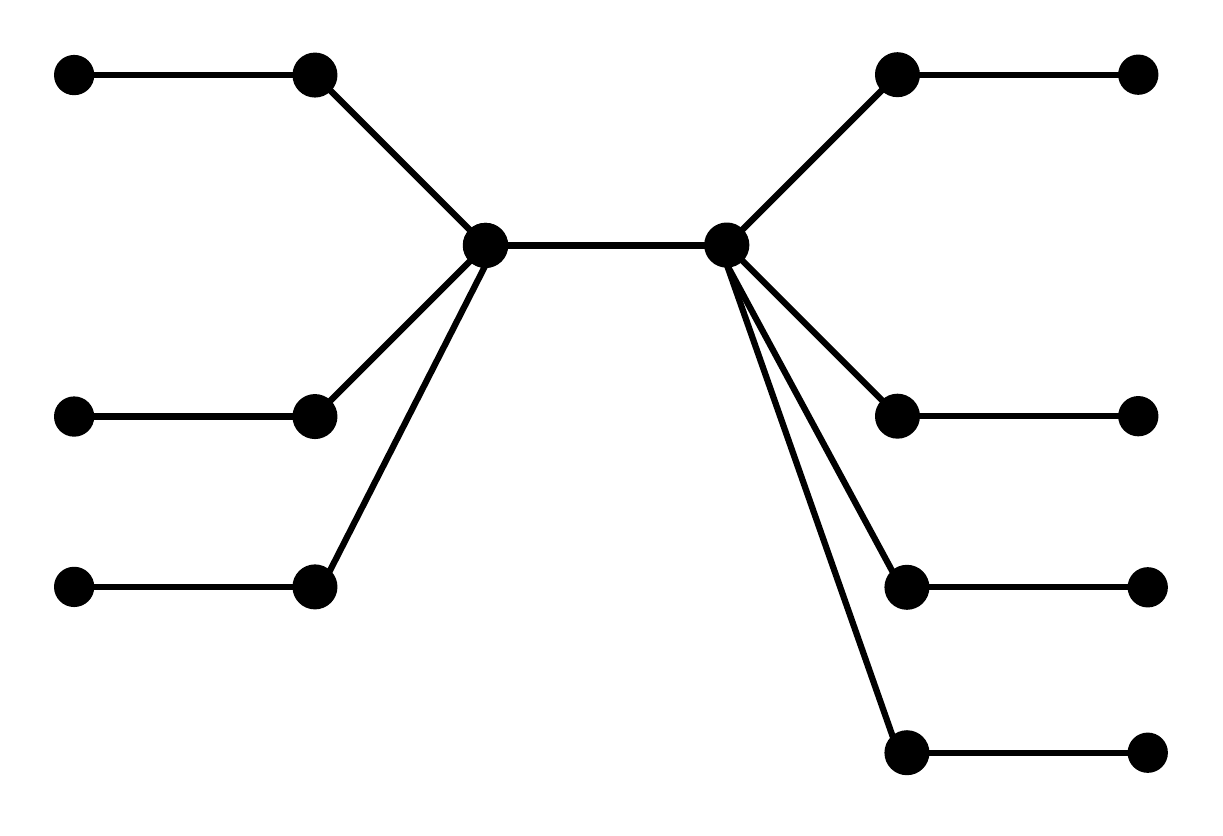}
    \caption{A $3$-spanner.}
    \label{fig:3spanner}
\end{figure}

\noindent An \textbf{inner edge} (resp. \textbf{outer edge}) is an edge in a leg of a
$ k $-spanner adjacent to a central vertex (resp. vertex of degree 1).\\
For a graph $ G \defeq (V, E) $ and a matching $ M \subseteq E $, we say that
a vertex $ v \in V $ is \textbf{saturated} in $ M $ if an edge in $ M $ is
incident to $ v $.


\section{Achieving every ratio}\label{sec:ratios}

In this section, we prove by construction that every rational ratio between $\frac{4}{5}$ and $1$ is achieved by a connected graph (\cref{achieving_every_rational}).

\subsection{Achieving any allowable ratio}
\begin{lemma}
\label{new_edge}
    Let $G \defeq (V, E)$ be a graph with nonadjacent vertices $u$ and $v$.
    Let $\hat{E} \defeq E \cup \{uv\}$, and $\hat{G} \defeq (V, \hat{E})$. Then
    $\lambda(\hat{G}) > \lambda(G) $ if and only if there is some $ (H, H') \in
    \Lambda(G)$ such that $u,v$ are both unsaturated in $H$ or $u,v$ are both
    unsaturated in $H'$.
\end{lemma}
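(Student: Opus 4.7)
The plan is to handle both directions of the biconditional by the obvious add/remove-the-edge operation and keep careful track of how $\lambda$ changes.

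For the easy direction $(\Leftarrow)$, suppose we are given a pair $(H,H')\in\Lambda(G)$ in which both $u$ and $v$ are unsaturated in, say, $H$. I would form $\hat H \defeq H\cup\{uv\}$ and note that this is a matching in $\hat G$ because the unsaturation hypothesis rules out any conflict with the edge $uv$. Since $uv\notin E$, the edge $uv$ is also not in $H'$, so $\hat H$ and $H'$ remain disjoint. Therefore $\lambda(\hat G)\ge|\hat H|+|H'|=\lambda(G)+1>\lambda(G)$.

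For the converse $(\Rightarrow)$, I would start with any pair $(\hat H,\hat H')\in\Lambda(\hat G)$. If neither $\hat H$ nor $\hat H'$ contains the new edge $uv$, then $(\hat H,\hat H')$ is a pair of disjoint matchings in $G$ as well, giving $\lambda(G)\ge\lambda(\hat G)$, contradicting the hypothesis. So, up to relabelling, $uv\in\hat H$. Set $H\defeq\hat H\setminus\{uv\}$ and $H'\defeq\hat H'$, which are disjoint matchings in $G$ with $|H|+|H'|=\lambda(\hat G)-1\ge\lambda(G)$. By the maximality defining $\lambda(G)$, this inequality is an equality, so $(H,H')\in\Lambda(G)$. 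Finally, because $\hat H$ is a matching containing $uv$, no other edge of $\hat H$ meets $u$ or $v$, so $u$ and $v$ are both unsaturated in $H=\hat H\setminus\{uv\}$, as required.

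There is no real obstacle here: the argument only uses the definition of a matching and the maximality of $\lambda$. The one small point worth stating cleanly, and which I would flag in the write-up, is the byproduct that adding a single edge can raise $\lambda$ by at most $1$, so in fact $\lambda(\hat G)=\lambda(G)+1$ whenever the conclusion of the lemma holds; this will likely be convenient in later applications of the lemma when building the graphs that realize the ratios in \cref{achieving_every_rational}.
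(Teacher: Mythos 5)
Your proof is correct and follows essentially the same route as the paper's: deleting $uv$ from a maximum pair of $\hat G$ to land in $\Lambda(G)$ with $u,v$ unsaturated, and adding $uv$ to an unsaturating matching for the converse. The paper's version is just terser, and your observation that $\lambda$ can increase by at most $1$ is a correct (implicit) ingredient of both arguments.
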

\begin{proof}
    Clearly, $\lambda(\hat{G}) > \lambda(G)$ if and only if $(u,v) \in \hat{H}
    \cup \hat{H}'$ for any $(\hat{H},  \hat{H}') \in \Lambda(\hat{G})$.
    The restriction of $(\hat{H},  \hat{H}')$ to $E$ gives a pair
    in $\Lambda(G)$ with $u,v$ unsaturated in $H$ or $H'$.
    Conversely, if there is $(H,H') \in \Lambda(G)$ such that,
    say, $H$ doesn't saturate $u$ and $v$, then $(H \cup \{(u,v)\}, H') \in
    \Lambda(\hat{G})$.
\end{proof}

\begin{notation*}
For a $k$-spanner $G$, $k > 0$, and a leg $L$ in $G$, we write $G-L$
to denote $G$ with the two vertices incident to the outer edge of $L$ removed. Notice then that $G-L$ is a ($k-1$)-spanner.
\end{notation*}

\begin{lemma}
\label{Leg}
    If $G$ is a $k$-spanner, then for any $(H,H') \in \Lambda_\mu(G)$, the
    inner edges of all but two of the legs adjoined to each central vertex do
    not belong to $H \cup H'$ and the outer edges of those legs are in $H$.
    Furthermore, $(H,H') \in \Lambda_\mu(G)$ is unique, up to
    an automorphism of $G$.
\end{lemma}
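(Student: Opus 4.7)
The plan is to fix a pair $(H,H') \in \Lambda_\mu(G)$, a central vertex $v$, and to analyze each leg $L$ at $v$ individually. Denote the middle vertex of $L$ by $v^m$ and the leaf by $v^l$, so that the inner edge is $e^{\mathrm{in}}=vv^m$ and the outer edge is $e^{\mathrm{out}}=v^m v^l$. The crucial structural input, baked into the definition of adjoining a leg, is that $v^m$ has degree exactly $2$ in $G$---its only neighbors are $v$ and $v^l$---while $v^l$ has degree $1$. This localizes every exchange argument inside $L$.

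I would first dispatch the ``all but two'' count. The inner edges at $v$ all share the endpoint $v$, and each of the matchings $H$ and $H'$ contains at most one edge incident to $v$, so at most two of the inner edges at $v$ can lie in $H\cup H'$. These are the (at most) two exceptional legs; I must then show that every other leg $L$ has $e^{\mathrm{out}}\in H$.

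This in turn splits into two short swap arguments. For the first, if neither $e^{\mathrm{in}}$ nor $e^{\mathrm{out}}$ lies in $H\cup H'$, then $v^m$ and $v^l$ are unsaturated in both $H$ and $H'$ (they have no other incident edges), so $(H\cup\{e^{\mathrm{out}}\},H')$ is still a pair of disjoint matchings, contradicting $|H|+|H'|=\lambda(G)$. For the second, suppose $e^{\mathrm{in}}\notin H\cup H'$ but $e^{\mathrm{out}}\in H'$. Then no edge incident to $v^m$ lies in $H$, and the only edge at $v^l$ is $e^{\mathrm{out}}\in H'$, so the swap
\[
(H,H')\ \longmapsto\ \bigl(H\cup\{e^{\mathrm{out}}\},\ H'\setminus\{e^{\mathrm{out}}\}\bigr)
\]
produces a pair in $\Lambda(G)$ whose first coordinate has size $\mu(G)+1$, contradicting the maximality of $\mu(G)$. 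Together these force $e^{\mathrm{out}}\in H$.

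I do not anticipate a serious obstacle: the proof is essentially two short exchange arguments in the same spirit as \cref{new_edge}, combined with a vertex-counting bound at $v$. The main point to handle carefully is that both the $\lambda$- and the $\mu$-maximality of $(H,H')$ are needed---the former to force $e^{\mathrm{out}}$ into $H\cup H'$, the latter to force it into $H$---and that the small degree of $v^m$ and $v^l$ is what makes the modified pairs remain disjoint matchings.
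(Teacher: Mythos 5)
Your proof is correct and follows essentially the same route as the paper: bound the number of inner edges at each central vertex by two using the matching condition, then force the outer edges of the remaining legs into $H$ by the same exchange argument (the paper compresses your two cases into one sentence, using $\lambda$-maximality when the outer edge is absent from $H\cup H'$ and $\mu$-maximality when it lies in $H'$). Your explicit appeal to the degrees of the middle vertex and the leaf is a welcome clarification of why the modified pairs remain disjoint matchings, but it is not a different argument.
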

\begin{proof}
    At most two of the edges incident to a central vertex can be in
    $H \cup H'$. Thus, $H \cup H'$ must leave out the inner edge of all
    except perhaps two legs adjoined to each central vertex.
    The maximality of $|H \cup H'|$ requires that we include exactly two edges
    incident to each central vertex in $H \cup H'$.
    The outer
    edges of the legs whose inner edges were left out
    must be in $H$ because otherwise, adding each such
    an edge to $H$ and removing it from $H'$ if it was in $H'$ results either
    in an increase of $H$ while preserving $|H \cup H'|$ or in an increase of
    $|H \cup H'|$.
\end{proof}

The proof of \cref{Leg} also establishes the following corollary.

\begin{cor}
\label{Saturated}
    In any $ k $-spanner $ G $ and for any $ (H, H') \in \Lambda_\mu(G) $, we
    have that both of the central vertices are saturated in both $ H $ and
    $ H' $.
\end{cor}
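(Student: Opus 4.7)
The plan is to leverage \cref{Lambda} directly: that lemma has already pinned down $(H, H') \in \Lambda_\mu(G)$ up to an automorphism of $G$ and identifies it with the configuration in \cref{fig:5match}. From that picture one can simply read off that each central vertex is incident to an edge of $H$ and an edge of $H'$. However, I would prefer to give a brief structural justification rather than appealing to inspection of a figure.

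To do so, I would first use \cref{Leg} to discard, for each central vertex, all but two of the adjoining legs: the inner edges of the discarded legs do not lie in $H \cup H'$, while their outer edges lie in $H$ but are not incident to the central vertices, so they play no role in the saturation question. What remains is a $0$-spanner on which the restriction of $(H, H')$ still realizes $|H| + |H'| = 8$. Next, I would invoke the observation already recorded inside the proof of \cref{Lambda}, namely that the edge $uv$ between the two central vertices $u, v$ cannot belong to $H \cup H'$; otherwise too many other edges in the $0$-spanner would have to be omitted and the count $|H \cup H'|$ would drop below $8$.

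Granting $uv \notin H \cup H'$, each central vertex has exactly two remaining incident edges, namely the inner edges of its two surviving legs. Since $H$ and $H'$ are disjoint matchings, each may contain at most one edge at a given vertex, so those two incident edges must split one into $H$ and one into $H'$. This is precisely the claim that both central vertices are saturated in both $H$ and $H'$. The only real obstacle along this route is confirming $uv \notin H \cup H'$, but that is already handled inside the proof of \cref{Lambda}, so no additional work is required.
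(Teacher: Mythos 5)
Your proposal is correct and matches the paper's (implicit) reasoning: the paper offers no separate proof, treating the corollary as immediate from the configuration pinned down in \cref{Lambda} and \cref{fig:5match}. Your added structural justification — reducing via \cref{Leg} to a $0$-spanner, using $uv \notin H \cup H'$ from the proof of \cref{Lambda}, and splitting the two remaining inner edges at each central vertex between $H$ and $H'$ — is a sound elaboration of exactly that route.
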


%
\begin{figure}[H]
    \centering
    \includegraphics[width=0.5\textwidth]{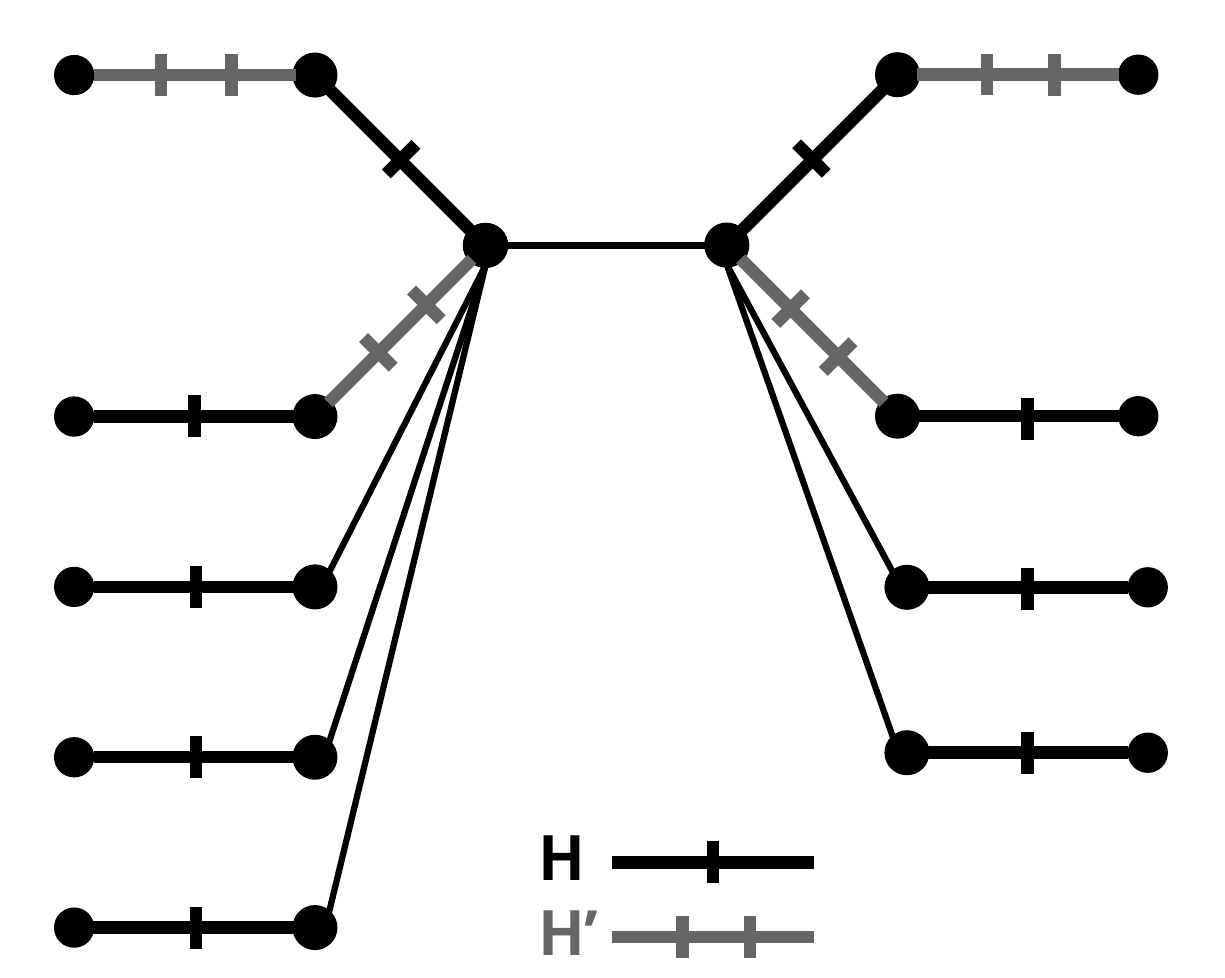}
    \caption{A pair $(H,H') \in \Lambda_\mu$ of a $5$-spanner.}
    \label{fig:5match}
\end{figure}
%

\begin{lemma}
    \label{Matching}
    Let $G$ be a $k$-spanner. Then $G$ admits a unique perfect matching and
    $\nu(G) = k + 5$, $\lambda(G) = k + 8$, and $\mu(G) = k + 4$.
%
%
\end{lemma}
\begin{proof}

    We take all the edges adjacent to the leaves and the unique edge between
    the central vertices. This is clearly a perfect matching and it contains
    one edge per leg, as well as another edge connecting the central vertices,
    so a total of $4 + k + 1$ edges. \cref{fig:span_perf} illustrates this
    perfect matching in a $2$-spanner.
    
    To verify the uniqueness of this perfect matching, note that the edges
    incident to the leaves have
    to be in any perfect matching, therefore the edges adjacent to them cannot
    be. This leaves only the edge between the two central vertices, so it has
    to belong to every perfect matching in order to saturate the central
    vertices.
    
    Fix $(H,H') \in \Lambda_\mu(G)$. By \cref{Leg}, the inner edges of all but
    two legs incident to each central vertex do not belong to $H \cup H'$ and
    the outer edges of those legs are in $H$. Removing these legs from the
    graph results in a $0$-spanner, for which it's easy to see that the
    maximum number of edges from $H \cup H'$ is $8$, where $H$ has $4$ edges,
    and the only possibility
    is to leave the edge between the central vertices out. Each additional leg
    adds one edge to $H$ and none to $H'$, so the formulas follow. For example,
    \cref{fig:5match} illustrates a pair $(H, H')$ in a $5$-spanner.
\end{proof}

\begin{lemma}
\label{Components}
    Let $G_i \defeq (V_i,E_i)$, $i = 1,2$, be distinct graphs, and in each
    graph, suppose there is a vertex $v_i \in V_i$
    that is saturated with both $H_i$ and $H_i'$ for any $(H_i, H_i') \in
    \Lambda(G_i)$. Let $G \defeq (V_1 \cup V_2, E_1 \cup E_2 \cup \{v_1 v_2\})$.
    Then for every $(H,H') \in \Lambda(G)$, $v_1v_2 \notin H \cup H'$. In
    particular, $(H \cap E_i, H' \cap E_i) \in \Lambda(G_i)$ for $i = 1,2$.
    The same statement holds with every occurrence of $\Lambda$ replaced with
    $\Lambda_\mu$.
\end{lemma}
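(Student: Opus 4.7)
The plan is to first show that no $\Lambda(G)$-pair uses the bridge edge $v_1 v_2$; the decomposition statement will then follow by restricting to $E_1$ and $E_2$.

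I would start with the trivial lower bound: concatenating any $(H_1, H_1') \in \Lambda(G_1)$ with any $(H_2, H_2') \in \Lambda(G_2)$ produces a pair of disjoint matchings in $G$ that avoids $v_1 v_2$, so $\lambda(G) \ge \lambda(G_1) + \lambda(G_2)$. To rule out $v_1 v_2$, I would fix $(H, H') \in \Lambda(G)$ and suppose toward contradiction that $v_1 v_2 \in H$ (the case $v_1 v_2 \in H'$ is symmetric). Since $H$ is a matching, $v_i$ has no $E_i$-edge in $H$, so $v_i$ is unsaturated in $H \cap E_i$. By hypothesis, $v_i$ must be saturated in both halves of every pair in $\Lambda(G_i)$, hence $(H \cap E_i, H' \cap E_i) \notin \Lambda(G_i)$ and
\[
|H \cap E_i| + |H' \cap E_i| \le \lambda(G_i) - 1 \quad \text{for both } i = 1,2.
\]
Summing while accounting for the single edge $v_1 v_2$ yields
\[
\lambda(G) - 1 = \sum_{i=1}^{2} \bigl(|H \cap E_i| + |H' \cap E_i|\bigr) \le \lambda(G_1) + \lambda(G_2) - 2,
\]
contradicting $\lambda(G) \ge \lambda(G_1) + \lambda(G_2)$. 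So $v_1 v_2 \notin H \cup H'$, and combining $|H| + |H'| = \sum_i(|H \cap E_i| + |H' \cap E_i|) = \lambda(G) \ge \lambda(G_1) + \lambda(G_2)$ with the summand bounds $|H \cap E_i| + |H' \cap E_i| \le \lambda(G_i)$ forces equality on each summand, giving $(H \cap E_i, H' \cap E_i) \in \Lambda(G_i)$.

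For the $\Lambda_\mu$ version, the same pasting of $\Lambda_\mu$-pairs gives $\mu(G) \ge \mu(G_1) + \mu(G_2)$. Given $(H, H') \in \Lambda_\mu(G)$, the $\Lambda$-part already yields $(H \cap E_i, H' \cap E_i) \in \Lambda(G_i)$, so $|H \cap E_i| \le \mu(G_i)$; then $|H| = \mu(G) \ge \mu(G_1) + \mu(G_2) \ge |H \cap E_1| + |H \cap E_2| = |H|$ forces each $|H \cap E_i| = \mu(G_i)$, completing the argument.

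The main obstacle is using the saturation hypothesis sharply enough: it must be invoked at both $v_1$ and $v_2$ simultaneously, producing the pair of strict bounds $|H \cap E_i| + |H' \cap E_i| \le \lambda(G_i) - 1$, so that their combined deficit of $2$ outweighs the $+1$ contributed by the edge $v_1 v_2$. A weaker invocation (e.g. at only one $v_i$) gives $\lambda(G) \le \lambda(G_1) + \lambda(G_2)$, which is not a contradiction.
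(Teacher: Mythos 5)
Your proof is correct and takes essentially the same route as the paper's: assume the bridge edge is used, note that both restrictions $(H \cap E_i, H' \cap E_i)$ then fail to be in $\Lambda(G_i)$ because $v_i$ is unsaturated in one of them, and derive $\lambda(G) \le \lambda(G_1) + \lambda(G_2) - 1$, contradicting the lower bound obtained by pasting optimal pairs from the two components. You additionally write out the deduction of the ``in particular'' clause and the $\Lambda_\mu$ variant, which the paper leaves implicit.
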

\begin{proof}
    Fix $(H,H') \in \Lambda(G')$ and note that it is enough to show that
    $v_1 v_2 \notin H \cup H'$. Assume towards a contradiction that
    $v_1 v_2 \in H \cup H'$. Then
    $(H \cap E_i, H' \cap E_i) \notin \Lambda(G_i)$ for each $i = 1, 2$
    because $v_i$ is unsaturated with either $H \cap E_i$ or $H' \cap E_i$.
    Thus, $\lambda(G) = |H \cup H'| \le 1 + (\lambda(G_1) - 1) +
    (\lambda(G_2) - 1) = \lambda(G_1) + \lambda(G_2) - 1$. But this
    contradicts the fact that for any $(H_i, H_i') \in \Lambda(G_i)$,
    $(H_1 \cup H_2, H_1' \cup H_2')$ is a pair of disjoint matchings in $G'$
    with total number of edges equal $\lambda(G_1) + \lambda(G_2)$.
\end{proof}

\begin{lemma}
\label{Forest}
    Let $ G_1 \defeq S_1 $ be a $k_1$-spanner graph, and for any $ n \geq 1 $,
    let $ G_{n + 1} $ be a graph formed by connecting one of the central
    vertices in a spanner in $ G_n $ to one of the central vertices of 
    a new $k_n$-spanner $ S_{n + 1} $.
    Then $ \forall m \geq 1 $, we have that if $ (H, H') \in \Lambda_\mu(G_m) $,
    then $(H \cap E(S_n), H' \cap E(S_n)) \in \Lambda_\mu(S_n)$ for each
    $n \le m$.
\end{lemma}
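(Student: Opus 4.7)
The plan is to proceed by induction on $m$. The base case $m=1$ is immediate, since $G_1 = S_1$ is itself a spanner and the restriction to $E(S_1)$ is the whole pair.

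For the inductive step, suppose the statement holds for $G_m$ and consider $G_{m+1}$, which is built from $G_m$ by adjoining the new spanner $S_{m+1}$ via a single bridge edge $e = v_1 v_2$, where $v_1$ is a central vertex of some spanner $S_j$ (with $j \le m$) sitting inside $G_m$, and $v_2$ is a central vertex of $S_{m+1}$. The key tool I would invoke is the $\Lambda_\mu$-version of \cref{Components} applied to the two sides $G_m$ and $S_{m+1}$ bridged by $e$. Once it applies, it gives, for any $(H, H') \in \Lambda_\mu(G_{m+1})$, that $e \notin H \cup H'$ and that $(H \cap E(G_m), H' \cap E(G_m)) \in \Lambda_\mu(G_m)$ together with $(H \cap E(S_{m+1}), H' \cap E(S_{m+1})) \in \Lambda_\mu(S_{m+1})$. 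Feeding the first restriction into the inductive hypothesis handles all $n \le m$, while the second restriction handles $n = m+1$, closing the induction.

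The step requiring the most care, and which I expect to be the main obstacle, is verifying the saturation hypothesis of \cref{Components} on both sides of the bridge: namely, that every $\Lambda_\mu$-pair of the respective side saturates the bridge endpoint in both matchings. For $v_2$ this is immediate from \cref{Saturated}, since $v_2$ is a central vertex of the spanner $S_{m+1}$. For $v_1 \in G_m$, the claim is not obvious a priori, and here I would use the inductive hypothesis itself: given any $(H, H') \in \Lambda_\mu(G_m)$, the inductive hypothesis supplies $(H \cap E(S_j), H' \cap E(S_j)) \in \Lambda_\mu(S_j)$, and then \cref{Saturated} applied to $S_j$ forces its central vertex $v_1$ to be saturated in both of these restricted matchings, hence in both $H$ and $H'$ themselves. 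With both saturation conditions in place, \cref{Components} applies as planned, and the inductive step goes through.
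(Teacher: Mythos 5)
Your proposal is correct and follows essentially the same route as the paper: the paper's proof is exactly the combination of \cref{Saturated} (to supply the saturation hypothesis) and the $\Lambda_\mu$ version of \cref{Components}, with the induction on the chain of spanners left implicit. In fact your write-up is more careful than the paper's two-line argument, since you explicitly justify via the inductive hypothesis that the bridge endpoint $v_1$ is saturated in both matchings of every $\Lambda_\mu$-pair of the \emph{whole} graph $G_m$ (not merely of the spanner $S_j$ in isolation), which is precisely the point the paper glosses over.
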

\begin{proof}
    By \cref{Saturated}, the central vertices of every $S_k$ are
    saturated with both $H_k$ and $H_k'$ for any $(H_k,H_k') \in \Lambda(S_k)$.
    The statement, therefore, follows from \cref{Components}.
\end{proof}

We are now ready to prove \cref{achieving_every_rational}, which we restate here for the reader's convenience.

\begin{namedthm*}{\cref{achieving_every_rational}}
    For any $ m, n $ such that $ \frac{4}{5} \leq \frac{m}{n} < 1 $, there is a connected graph $ G $ with $ \mu(G) = m $ and $ \nu(G) = n $.
\end{namedthm*}
\begin{proof}
    Note first that $n \ge m+1$ and $5m - 4n \ge 0$. Let $G \defeq G_{n-m}$ be a graph
    constructed as in \cref{Forest}, where we take a $0$-spanner as $S_1, \dots,
    S_{n-m-1}$ and $k$-spanner as $S_{n-m}$. Then, by Lemmas \ref{Matching}
    and \ref{Forest} we get that
    $\mu(G) = 4(n-m-1) + 4 + k = 4n - 4m + k$ and
    $\nu(G) = 5(n-m-1) + 5 + k = 5n - 5m + k$, so taking
    $k \defeq 5m-4n$ makes $\mu(G) = m$ and $\nu(G) = n$.
\end{proof}

All of the graphs with ratio less than 1 that have been constructed so far
contain a perfect matching by design. However, this is not a necessary
condition for a graph to have ratio less than 1: the graph in
\cref{fig:oddspanner} is a counterexample. It has ratio $\frac{5}{6}$, but it
does not admit a perfect matching.

\begin{figure}[H]
            \centering
		     \includegraphics[scale=0.8]{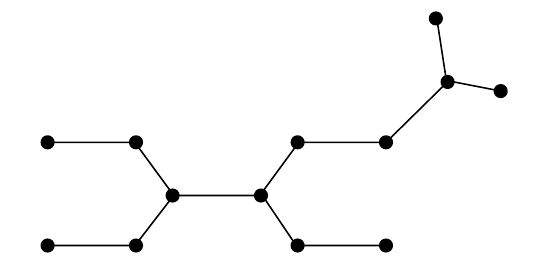}
		     \caption{A graph with an odd number of vertices and $\frac{\mu(G)}{\nu(G)} = \frac{5}{6}$.}
             \label{fig:oddspanner}
\end{figure}
 Furthermore, by adding more vertices around the new 3-vertex, arbitrarily
 many vertices can be missed from the maximum matching. This graph also
 demonstrates that ratio less than 1 can be achieved by a graph with an
 odd number of vertices.\\\\

\section{Ratio 1}\label{sec:ratio_one}

In this section, we provide several sufficient conditions for a graph to have ratio $1$, most notable of which is \cref{admitting_diamond_spanners}. The latter involves \textbf{diamond spanners}, which, by definition, are spanners with any number (possibly zero) of central diamonds as in \cref{fig:forbidden}. We will prove \cref{admitting_diamond_spanners} after some observations and lemmas.

Throughout, we let $G \defeq (V,E)$ be a finite connected graph and $n \defeq |V|$.

\subsection{Observations and examples for ratio 1}

\begin{obs}\label{near_perfect=>maximum}
	$\nu(G) \le \floor{\frac{n}{2}}$.
\end{obs}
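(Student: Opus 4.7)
The plan is to observe that any matching $M$ in $G$ consists of pairwise non-adjacent edges, so the endpoints of distinct edges in $M$ are distinct. Thus $M$ saturates exactly $2|M|$ vertices, all of which lie in $V$.

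First I would fix a maximum matching $M \subseteq E$ with $|M| = \nu(G)$. Then I would note that since no two edges of $M$ share a vertex, the set of saturated vertices has cardinality $2\nu(G)$, and this set is a subset of $V$, whence $2\nu(G) \le n$. Dividing by $2$ gives $\nu(G) \le n/2$, and since $\nu(G)$ is an integer, $\nu(G) \le \lfloor n/2 \rfloor$.

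There is no real obstacle here; the statement is essentially a counting remark, serving as a baseline bound for the subsequent discussion of near perfect and perfect matchings in \cref{sec:ratio_one}.
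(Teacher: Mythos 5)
Your proof is correct and is the standard counting argument; the paper states this as an unproved observation, and your argument is exactly the one being implicitly invoked. Nothing to add.
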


\begin{lemma}\label{n-1_covering}
	If $G$ admits two disjoint matchings whose union contains at least $n - 1$ edges, then $\mu(G) = \floor{\frac{n}{2}}=\nu(G)$. 
\end{lemma}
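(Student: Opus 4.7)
The plan is to fix any $(H,H') \in \Lambda(G)$ and analyze the subgraph $H \cup H'$. Since $H$ and $H'$ are matchings, every vertex has degree at most $2$ in $H \cup H'$, so this subgraph decomposes into disjoint (alternating) paths, even cycles, and isolated vertices. Letting $p$ be the number of path components of positive length and $s$ the number of isolated vertices, a vertex count gives $n - \lambda(G) = p + s$. The hypothesis (plus $\lambda(G) \ge |H| + |H'| \ge n-1$) forces $p + s \le 1$, which is the key structural consequence.

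I would then case-split on $p+s \in \{0,1\}$. If $p+s = 0$, then $H \cup H'$ is a spanning union of even cycles, so $|H| = |H'| = n/2$, and in particular $n$ is even and $\max(|H|,|H'|) = \lfloor n/2 \rfloor$. If $p+s = 1$, there are two sub-cases. When $p = 1$, $s = 0$, the cycles contribute equally to $H$ and $H'$, while a path of length $k$ contributes $\lceil k/2 \rceil$ to one matching and $\lfloor k/2 \rfloor$ to the other; checking the two parities of $k$ shows that the totals are $(n-1)/2$ and $(n-1)/2$ (with $n$ odd) or $n/2$ and $n/2 - 1$ (with $n$ even). When $p = 0$, $s = 1$, only cycles appear and $|H| = |H'| = (n-1)/2$, with $n$ necessarily odd since even cycle lengths sum to $n-1$. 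In every case, $\max(|H|,|H'|) = \lfloor n/2 \rfloor$.

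Because $\Lambda(G)$ is symmetric in its two coordinates, this yields a pair in $\Lambda(G)$ whose first coordinate has size $\lfloor n/2 \rfloor$, hence $\mu(G) \ge \lfloor n/2 \rfloor$. Combining with the trivial bound $\mu(G) \le \nu(G) \le \lfloor n/2 \rfloor$ from \cref{near_perfect=>maximum} closes both inequalities simultaneously and gives $\mu(G) = \nu(G) = \lfloor n/2 \rfloor$. There is no serious obstacle; the only care needed is the parity bookkeeping in the $p+s = 1$ case, which must be consistent with the integrality of $|H|$ and $|H'|$.
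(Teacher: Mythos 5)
Your proof is correct, and its overall skeleton matches the paper's: establish $\mu(G) \ge \floor{\frac{n}{2}}$ and then squeeze against $\mu(G) \le \nu(G) \le \floor{\frac{n}{2}}$ from \cref{near_perfect=>maximum}. Where you differ is in how the lower bound is obtained. The paper gets it in one line by averaging: since some pair $(H,H')$ realizes $\lambda(G) = |H|+|H'| \ge n-1$ and $\Lambda(G)$ is symmetric, $\mu(G) \ge \max(|H|,|H'|) \ge \ceil{\frac{n-1}{2}} = \floor{\frac{n}{2}}$. You instead decompose $H \cup H'$ into alternating paths and even cycles, count vertices to deduce $p+s \le 1$, and track parities case by case to compute $\max(|H|,|H'|)$ exactly. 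This is all valid (cycles in a union of two disjoint matchings are indeed even and split evenly, and your bookkeeping checks out), and it yields slightly more information — the precise values of $|H|$ and $|H'|$ and the structure of $H \cup H'$ — but none of that extra information is needed for the statement. The pigeonhole inequality $\max(|H|,|H'|) \ge \ceil{\frac{|H|+|H'|}{2}}$ already suffices, so the decomposition and parity analysis can be dropped without loss.
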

\begin{proof}
	The hypothesis implies that $\lambda(G) \ge n - 1$, i.e., $\lambda(G)$ is $n-1$ or $n$. Either way, $\mu(G) \ge \ceil{\frac{n-1}{2}} = \floor{\frac{n}{2}}$, so $\mu(G) = \nu(G)$ by \cref{near_perfect=>maximum}.
\end{proof}

\begin{defn}
	A path/cycle in $G$ is called \textbf{Hamiltonian} if each vertex of $G$ appears in it exactly once.
\end{defn}

\begin{cor}
	If $G$ admits a Hamiltonian path, then $\mu(G) = \nu(G)$.
\end{cor}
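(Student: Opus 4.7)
The plan is to reduce directly to \cref{n-1_covering}. A Hamiltonian path visits each of the $n$ vertices exactly once, and therefore consists of exactly $n - 1$ edges. I would label the edges of the path in order as $e_1, e_2, \dots, e_{n-1}$ and set
\[
H \defeq \set{e_i : i \text{ odd}}, \qquad H' \defeq \set{e_i : i \text{ even}}.
\]
Because consecutive edges of a path share a vertex but non-consecutive edges of a path do not, each of $H$ and $H'$ is a matching, and they are clearly disjoint. Their union has exactly $n - 1$ edges.

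Now I would invoke \cref{n-1_covering} with this pair of matchings, which immediately gives $\mu(G) = \floor{\frac{n}{2}} = \nu(G)$.

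There is essentially no obstacle here; the only thing worth double-checking is the parity cases for $n$, but these are already absorbed into \cref{n-1_covering} via the floor/ceiling identity used in its proof, so no further bookkeeping is needed. The whole proof is a two-line application of the preceding lemma.
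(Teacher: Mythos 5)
Your proof is correct and matches the paper's argument exactly: both decompose the $n-1$ edges of the Hamiltonian path into two disjoint matchings by alternating, then invoke \cref{n-1_covering}. You merely spell out the alternation explicitly where the paper leaves it implicit.
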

\begin{proof}
	A Hamiltonian path has $n - 1$ edges, which lie in a pair of disjoint matchings, so \cref{n-1_covering} applies.
\end{proof}

		


There are also plenty of graphs with ratio 1 and no Hamiltonian path. 

\begin{example}
In \cref{fig:1nohamilt}, we exhibit an augmented spanner with no Hamiltonian path and having ratio 1.

\begin{figure}[H]
            \centering
		     \includegraphics[scale=0.8]{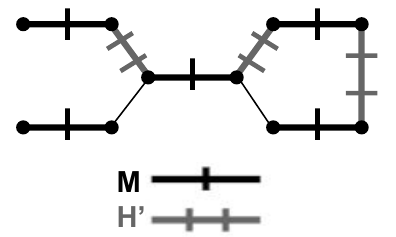}
		     \caption{No Hamiltonian path and $\frac{\mu(G)}{\nu(G)} = 1$.}
             \label{fig:1nohamilt}
\end{figure}

\noindent
Indeed, the graph $G$ in \cref{fig:1nohamilt} has $\lambda(G) = 8$, and this can be achieved with $|H| = 5$ and $|H'| = 3$, as in the figure. $H$ is also a perfect matching, so $\nu(G) = \mu(G) = 5$.
\end{example}

\begin{example}
Here is another example with ratio 1 and no Hamiltonian path. Let $P_{n,2}$ be the \textbf{propeller graph} with $n$ blades of length $2$, namely, the graph obtained by joining $n$ paths of length two to a shared central vertex.
\Cref{fig:propeller} depicts $P_{4,2}$.
\begin{figure}[H]
            \centering
		     \includegraphics[scale=0.5]{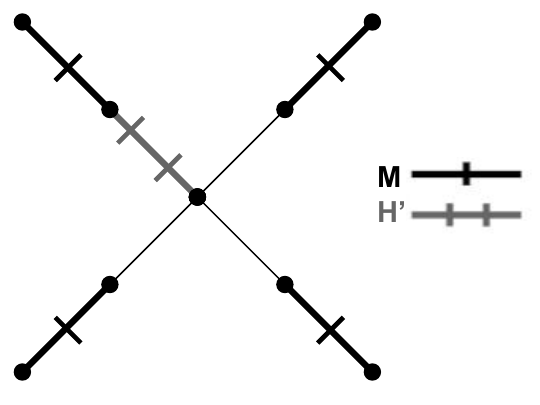}
		     \caption{$P_{4,2}$.}
             \label{fig:propeller}
\end{figure}
\begin{prop}
$\lambda(P_{n,2}) = n+2$, $\nu(P_{n,2}) = \mu(P_{n,2}) = n$, and hence $\mu'(P_{n,2}) = 2$. In particular, the difference between the parameters $\mu$ and $\mu'$ can be arbitrarily large.

\end{prop}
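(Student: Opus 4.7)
The plan is to pin down $\lambda$ and $\nu$ by exploiting the star-like structure at the center, then exhibit one explicit witness that simultaneously realizes $\lambda(P^2_n)$ and $\mu(P^2_n) = n$. I will fix notation as follows: let $c$ be the central vertex, and for each blade $i \in \{1, \dots, n\}$ denote by $m_i$ its middle vertex and $\ell_i$ its leaf, so that the blade consists of the \emph{inner} edge $e_i \defeq cm_i$ and the \emph{outer} edge $f_i \defeq m_i \ell_i$. The structural facts to keep in mind are: all $e_i$ meet at $c$; $e_i$ meets $f_i$ at $m_i$; and distinct $f_i, f_j$ are vertex-disjoint.

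First I would show $\nu(P^2_n) = n$. Any matching contains at most one inner edge (all $e_i$ share $c$) and at most one edge per blade (since $e_i$ and $f_i$ share $m_i$), so a matching has size at most $n$. The set $\{f_1, \dots, f_n\}$ is a matching of this size, witnessing $\nu(P^2_n) = n$. Next I would show $\lambda(P^2_n) \le n+2$: if $(H, H')$ are disjoint matchings, then each of $H$ and $H'$ contains at most one inner edge, so together they contain at most two; combined with the trivial bound of $n$ on outer edges, this yields $|H| + |H'| = |H \cup H'| \le n + 2$.

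To simultaneously realize $\lambda = n+2$ and $\mu \ge n$, I would exhibit the pair
\[
H \defeq \{e_1\} \cup \{f_i : 2 \le i \le n\}, \qquad H' \defeq \{f_1, e_2\}.
\]
A direct check shows $H$ and $H'$ are disjoint matchings with $|H| + |H'| = n + 2$, so $\lambda(P^2_n) = n+2$, $(H, H') \in \Lambda(P^2_n)$, and $|H| = n$ gives $\mu(P^2_n) \ge n$. Combined with the general inequality $\mu(P^2_n) \le \nu(P^2_n)$, the identity $\nu(P^2_n) = n$ forces $\mu(P^2_n) = n$, and hence $\mu'(P^2_n) = \lambda(P^2_n) - \mu(P^2_n) = 2$.

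The final clause then follows: for $P^2_n$ we compute $\mu - \mu' = n - 2$, which is unbounded as $n \to \infty$. There is no real obstacle here; the argument is essentially bookkeeping forced by the single shared vertex $c$, and the only mild care is to verify that the explicit pair above is indeed disjoint and that both halves are matchings, which is immediate from the labeling.
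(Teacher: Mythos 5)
Your proposal is correct and follows essentially the same route as the paper: the same counting bound $\lambda(P^2_n) \le n+2$ (outer edges plus at most two edges at the center) and the same explicit pair $(H,H')$ with $|H| = n$, $|H'| = 2$; the only cosmetic difference is that you bound $\nu$ by counting one edge per blade rather than invoking $\nu \le \lfloor (2n+1)/2 \rfloor$. No gaps.
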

\begin{proof}
For any pair $(H,H')$ of disjoint matchings, each leaf can be incident to at most one edge in $H \cup H'$ and the central vertex can be incident to at most two edges in $H \cup H'$, so $\lambda(P_{n,2}) \le n+2$. We show that this is, in fact, an equality. Letting $u$ denote the central vertex, $w$ be a leaf, and $v$ be the vertex adjacent to both $u$ and $w$, we let $H$ contain $u v$ and all edges incident to all leaves except for $w$. We then let $H'$ contain $v w$ and an edge incident to $u$ distinct from $u v$. Thus $|H| = n$ and $|H'| = 2$, so $(H,H')$ achieves $\lambda(P_{n,2})$. By Observation 5.2, $H$ is a maximum matching, so $\nu(P_{n,2}) = \mu(P_{n,2}) = n$.
\end{proof}
\end{example}

\subsection{Alternating paths}
We present some lemmas about alternating paths before we prove \cref{admitting_diamond_spanners}.

\begin{defn}
Choose matchings $(M,H,H')$ such that $|M|=\nu(G)$, $|H|=\mu(G)$, $H$ is disjoint from $H'$, $|H|+|H'|=\lambda(G)$, $|M\cap(H\cup H')|$ is maximized, and among these, such that $|M\cap H|$ is maximized. Call the triple $(M,H,H')$ a \textbf{maximum intersection triple}.
\end{defn}
\begin{lemma}\label{augpaths}
Let $G$ be a finite graph with $\mu(G)<\nu(G)$. Let $(M,H,H')$ be a maximum intersection triple of $G$. Let $P$ be a maximal $M-H$ alternating chain, and it follows from maximality of $(M,H,H')$ that $P$ is length at least two. Then
\begin{enumerate}[label=(\roman*)]

\item $P$ is not a cycle.
\item The length of $P$ is odd and its end edges are in $M$.
\item The end edges of $P$ are in $H'$.
\item All interior vertices of $P$ are incident to $H'$.
\item $P$ contains at least one $M\backslash H'$ edge.
\item There does not exist an even cycle $C$ in $G$ such that half of $C$'s edges are in $M$, the remaining edges in $H\cup H'$, and $C\cap P \cap M\neq \emptyset$.
\end{enumerate}
\end{lemma}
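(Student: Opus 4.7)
My plan is to prove each part by a local exchange argument: modify one of $M$, $H$, $H'$ along $P$ (or a suitable sub-path) and derive a contradiction with the maximality of one of $|M|=\nu(G)$, $|H|=\mu(G)$, $|M\cap(H\cup H')|$, or $|M\cap H|$ (in the priority order defining a max-intersection triple). A recurring technical ingredient is the maximality of $P$: at an endpoint $v_0$ whose incident $P$-edge is $e_1\in M\setminus H$, there can be no $H\setminus M$ edge at $v_0$ (else $P$ could be extended), and consequently no $H$-edge at $v_0$ at all (any $H\cap M$ edge would conflict with $e_1$).

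For (i), if $P$ is a closed alternating cycle (necessarily even), I set $M^{\#} := M\Delta P$, still a matching of size $|M|$, and compute $|M^{\#}\cap H| = |M\cap H|+|P\cap H|$ and $|M^{\#}\cap H'| = |M\cap H'|-|P\cap M\cap H'|$. Since $|P\cap M\cap H'|\le|P\cap M|=|P\cap H|$, the value $|M^{\#}\cap(H\cup H')|$ does not decrease; if equality holds then $|M^{\#}\cap H|$ strictly increases. Either way we violate the definition of the max-intersection triple.

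For (ii), the same swap $M\mapsto M\Delta P$ produces an analogous (size-preserving) improvement in $|M\cap H|$ when $P$ has even length. If $P$ has odd length with both ends in $H$, then $|M\Delta P|=|M|+1$, and the maximality of $P$ guarantees that $M\Delta P$ is a valid matching, contradicting $|M|=\nu(G)$. Hence $P$ is a path of odd length whose end edges lie in $M$. For (iii), suppose $e_1\notin H'$; then $e_1\in M\setminus(H\cup H')$. Since $v_0$ has no $H$-edge and the only $H$-edge at $v_1$ is $e_2$, the set $H^{\#}:=(H\setminus\{e_2\})\cup\{e_1\}$ is a matching of size $\mu$ disjoint from $H'$, so $(H^{\#},H')\in\Lambda_\mu(G)$; a direct count gives $|M\cap(H^{\#}\cup H')|=|M\cap(H\cup H')|+1$, contradicting maximality. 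The symmetric argument handles $e_k$.

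Parts (iv)--(vi) follow similar but more delicate exchange principles, with (iii) as input. For (iv), at an interior vertex $v_i$ with no $H'$-edge, the odd-indexed $M$-edge incident to $v_i$ lies in $M\setminus(H\cup H')$, and a swap along a sub-path of $P$ together with a modification of $H'$ should give a better triple. For (v), assuming $P\cap M\subseteq H'$, the $(k+1)/2$ pairwise-disjoint $M\cap H'$ edges on $P$ enable a global swap of $H$ along $P$ (paired with the deletion $H'\setminus(P\cap M)$ and a compensating adjustment) that strictly improves one of the maximized quantities. For (vi), an even alternating cycle $C$ between $M$ and $H\cup H'$ sharing an $M$-edge with $P$ gives an exchange around $C$ that again improves either $|M\cap H|$ or $|M\cap(H\cup H')|$. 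The main obstacle I anticipate is that each of these swaps must produce valid disjoint matchings; verifying this requires careful local analysis using the maximality of $P$ and the structural consequences of (i)--(iii), particularly to handle interactions between the modified matchings and $H'$-edges incident to $P$.
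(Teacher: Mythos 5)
Your arguments for (i)--(iii) are complete and sound, and essentially coincide with the paper's (your (ii), via the global exchange $M\mapsto M\Delta P$ in the even-length case and an augmenting path in the odd, both-ends-in-$H$ case, is a mild variant of the paper's local end-edge swap). The genuine gaps are in (iv)--(vi), which you only sketch. For (iv), the correct move is not ``a swap along a sub-path of $P$'': having noted that the unique $M$-edge $e_M$ at the offending interior vertex lies in $M\setminus(H\cup H')$, one splits on its other endpoint $w$. If $w$ is $H'$-unsaturated, then $H'\cup\{e_M\}$ is a matching disjoint from $H$, contradicting $|H|+|H'|=\lambda(G)$; if $w$ meets an $H'$-edge $f$, then $f\notin M$ (else $w$ carries two $M$-edges), and $(H'\setminus\{f\})\cup\{e_M\}$ strictly increases $|M\cap(H\cup H')|$. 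You have not identified this case split, and note that the first case contradicts the maximality of $\lambda$ rather than any of the intersection quantities. For (v), the move is simply to exchange $H$ and $H'$ along $P$; the verification you defer is the entire content of the step: (iv), the hypothesis $P\cap M\subseteq H'$, and $H'$ being a matching force each interior vertex's unique $H'$-edge to be its $P\cap M$-edge, while (ii) and the maximality of $P$ leave the end vertices with no incident $H$-edge, so the exchange produces disjoint matchings; since $P$ carries $(k+1)/2$ edges of $M$ but only $(k-1)/2$ of $H$, the new first matching even has size $\mu(G)+1$, an immediate contradiction.

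The most serious omission is in (vi). The exchange $M\mapsto M\Delta C$ improves $|M\cap(H\cup H')|$ only if $C$ contains an edge of $M\setminus(H\cup H')$; if every $M$-edge of $C$ lay in $H\cup H'$, the symmetric difference need not improve any of the maximized quantities (the change in $|M\cap H|$ can have either sign). Producing such an edge is precisely where the hypothesis that $C$ shares an $M$-edge with $P$ enters, and your sketch records this hypothesis but never uses it. The needed chain is: since $P$ is acyclic, $C$ must leave $P$ at some vertex $u\in V(P)$ through a non-$P$ edge $f$ of $C$; then $f\notin H$ (every $H$-edge at a vertex of $P$ is a $P$-edge, by maximality of $P$ and $H$ being a matching) and $f\notin M$ (every vertex of $P$ has its unique $M$-edge inside $P$, using (ii) for the endpoints), so $f\in H'\setminus M$; hence the second $C$-edge at $u$ is the unique $M$-edge of $u$, which lies in $P$ and cannot be in $H'$ (else $u$ meets two $H'$-edges). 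That edge is the required $M\setminus(H\cup H')$ edge of $C$. Without this step, which leans on (ii)--(iv) and the maximality of $P$, part (vi) does not close.
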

\begin{proof}

(i): Suppose $P$ is a cycle. If $P$ is an odd cycle then note that two adjacent edges will be in the same matching, contradicting matching definition. Suppose then that $P$ is an even cycle. Consider $M'\defeq(M\backslash P)\cup(H\cap P)$, and note that this is a matching since $P$ is an even cycle and maximum since $M$ is. Then $|M'\cap(H\cup H')| \geq |M\cap(H\cup H')|$ and $|M'\cap H|>|M\cap H|$, a contradiction.\\\\
(ii): By (i), $P$ is not a cycle, so $P$ is a path. Suppose that some end edge of $P$, $e_0$ is in $H$. $e_0$ can be incident to $M$ on only one side, as $P$ is a maximal alternating path. Call this $M\setminus H$ edge $e_1$. Since $e_0$ is incident to $M$ on only one side, we may define $M'=(M\backslash\{e_1\})\cup \{e_0\}$. $M'$ is a maximal matching, but $|M'\cap H| > |M\cap H|$, contradicting that ($M,H,H'$) was a maximal intersection triple. $P$ thus has no end edges in $H$. This implies that $P$ also has odd length.\\\\
(iii): Consider an end edge $e_0$ in $M\backslash H$. Again note that $e_0$ can be incident to $H$ on only one side. If $e_0$ is not in $H'$, then we can remove $e_1$, the preceding edge, from $H$, add $e_0$ to $H$, and again increase $|M\cap(H\cup H')|$.\\\\
(iv): Suppose instead some interior vertex is not incident to $H'$, and note that this vertex is incident to no end edges. This vertex is incident to exactly one edge in $M$, which we call $e_M$. $e_M$ is incident to $H'$ at most once. Via a similar argument to above, if we take $e_M$ to be in $H'$ instead of this adjacent edge, we increase $|M\cap(H\cup H')|$.\\\\
(v): Suppose towards a contradiction that all $M$ edges in $P$ are also in $H'$. Swapping $H$ and $H'$ on the path will increase $|M\cap H|$ and keep $|M\cap(H\cup H')|$ the same, a contradiction. We are allowed to do this because the two end edges of $P$ are not connected to $H$.\\\\
(vi): Suppose there exists such a $C$. Observe that $C$ must contain some $M\backslash (H\cup H')$ edge. To see this, note that because $P$ is acyclic, some $C\cap P \cap M$ edge, say $e$, is incident to fewer than two $C\cap P \cap H$ edges. Then $e$ must be adjacent to a $C\cap H'$ edge, or else $P$ is not maximal. Then $e\notin H'$, and $e\notin H$ since $e\in P$. Now, define $M' \defeq (M\backslash C)\cup (C\backslash M)$. $M'$ is a matching since no vertex of $C$ is adjacent to an $M$ edge not already in $C$. $M'$ is maximal since $M$ is. Because $C$ contained some $M\backslash (H\cup H')$ edge, immediately $|M'\cap(H\cup H')|>|M\cap(H\cup H')|$.
\end{proof}

\subsection{Sufficient condition for ratio 1: forbidden subgraphs}\label{sec:forb}
Let the \textbf{diamond graph} be the graph with four vertices depicted
in \cref{fig:diamond}.\\
Let a \textbf{diamond chain} be a graph resulting from taking some
natural number $n$ copies of the diamond graph, indexing them $D_i$.
Arbitrarily
distinguishing between the two vertices of degree two in each diamond
as the ``left'' and ``right'' such vertices, add an edge between
the right degree two vertex of diamond $D_i$ and the left degree two
vertex of $D_{i+1}$ for $1 \le i \le n-1$. A diamond chain is depicted in \cref{fig:diamond_chain}

%
\begin{figure}[h]
    \begin{subfigure}[b]{0.2\textwidth}
    \centering
    \includegraphics[width=0.75\textwidth]{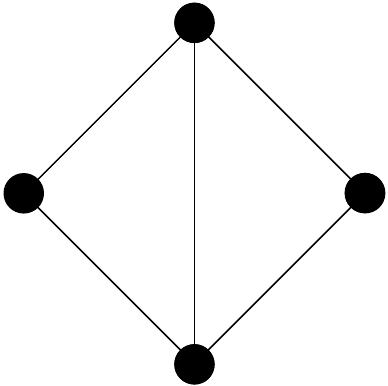}
    \caption{The diamond.}
    \label{fig:diamond}
    \end{subfigure}
    \\
    \begin{subfigure}[b]{0.8\textwidth}
    \includegraphics[width=\textwidth]{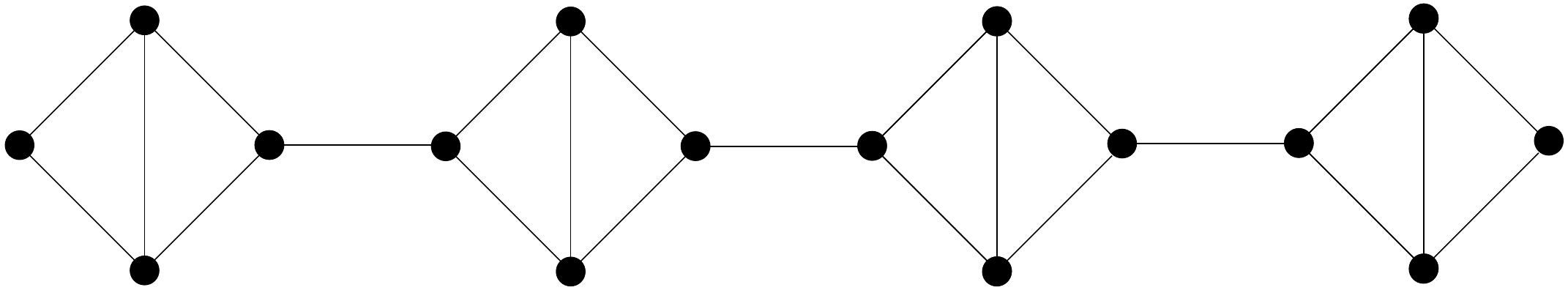}
    \caption{A diamond chain with $4$ diamonds.}
    \label{fig:diamond_chain}
    \end{subfigure}
    \caption{}
\end{figure}

\begin{defn}
\label{def:diamond_spanner}
A \textbf{diamond spanner} is a graph obtained from the spanner via
removing the edge between the central vertices and taking disjoint union with
a diamond chain. The central vertices of the spanner are then
connected to the degree two end vertices of the diamond chain. We also consider the
spanner to be a diamond spanner. See \cref{fig:forbidden}.
\end{defn}
We prove one more lemma about $M$-$H$ alternating paths before proving
\cref{admitting_diamond_spanners}.
\begin{lemma}\label{barriers}
Assume the same conditions as in \cref{augpaths}, and let $P$ be a maximal $M-H$ alternating path. If $P$ contains an edge $e$ of $M\setminus H'$ such that $G$ has no $H'$ edge or $H'-M$ alternating path connecting the two components of $P\setminus e$, then $e$ is an edge of a diamond spanner of $G$.
\end{lemma}
\begin{figure}[H]
            \centering
		     \includegraphics[scale=0.7]{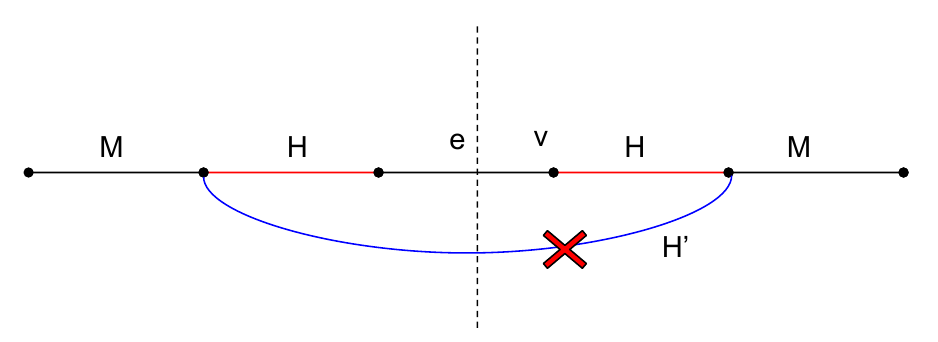}
		     \caption{The setup described in \cref{barriers}. No $H'$ edge or $H'-M$ alternating paths may cross the dashed barrier through the given $M\setminus H'$ edge.}
             \label{fig:barrier}
\end{figure}
\begin{proof}
The setup is depicted in \cref{fig:barrier}. We will show that each piece of $P\setminus e$ must witness a chain of diamonds that terminates in a spanner-end. Observe first that $e$ is not an end edge of $P$ by (iii) of \cref{augpaths}. Consider the right vertex $v$ of $e$. By (iv) of \cref{augpaths}, it must be adjacent to an $H'$ edge. By assumption, this edge does not cross over to the left side of $e$. Either this $H'$ edge connects $v$ to some vertex not in $P$ or to another $P$ vertex to the right of $e$. In the first case, note that this $H'$ edge must be connected to an $M$ edge, or else $|M\cap(H\cup H')|$ is not maximal. This $M$ edge must also be off of $P$ since all $P$ edges are saturated with $M$. We now have a path of length at least 2 extending off of $P$, and we see that the right side of $e$ is now a ``spanner-end", as in \cref{fig:spannerend1}.
\begin{figure}[H]
            \centering
		     \includegraphics[scale=0.7]{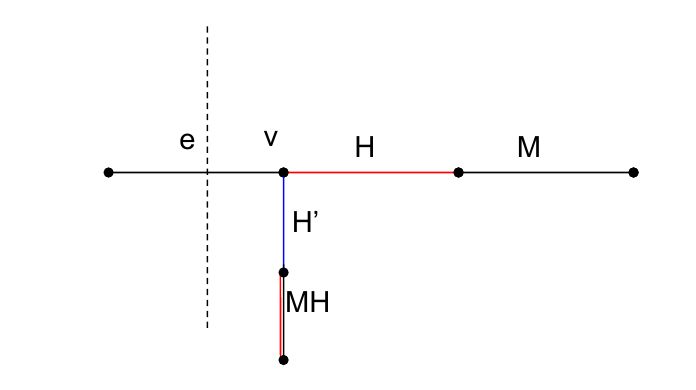}
		     \caption{Half of a spanner on the right side of $e$.}
             \label{fig:spannerend1}
\end{figure}
If instead our $H'$ edge connects back to the right of $v$ on $P$, then if it connects to a vertex of distance more than 2 away then we again have a spanner end, as shown in \cref{fig:spannerend2}. 
\begin{figure}[H]
    \centering
     \includegraphics[scale=0.7]{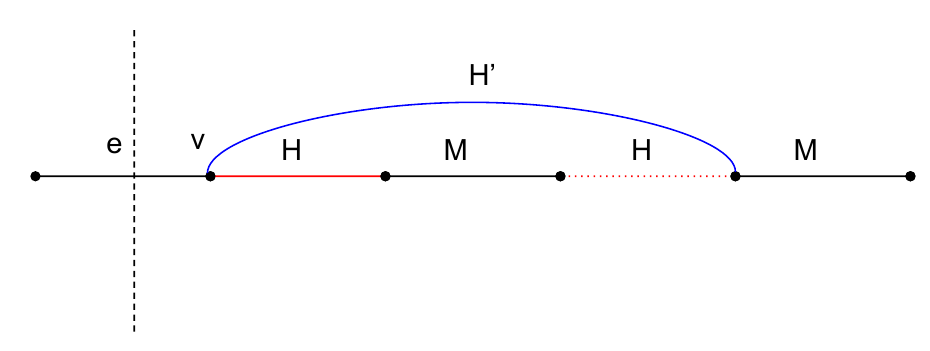}
     \caption{The dashed $H$ edge is not part of the spanner end.}
     \label{fig:spannerend2}
\end{figure}
If instead it connects to the vertex 2 to the right of $v$, then if follows from (iv) of \cref{augpaths} that the vertex directly to the right of $v$ must also be connected to an $H'$ edge. Again, if this edge connects more than distance 2 to the right, or goes off the path, we have a spanner-end. If instead this second $H'$ edge connects distance 2 away, observe that we now have a diamond configuration on the right hand side of $e$. This is illustrated in \cref{fig:diamondpath}.
\begin{figure}[H]
            \centering
		     \includegraphics[scale=0.7]{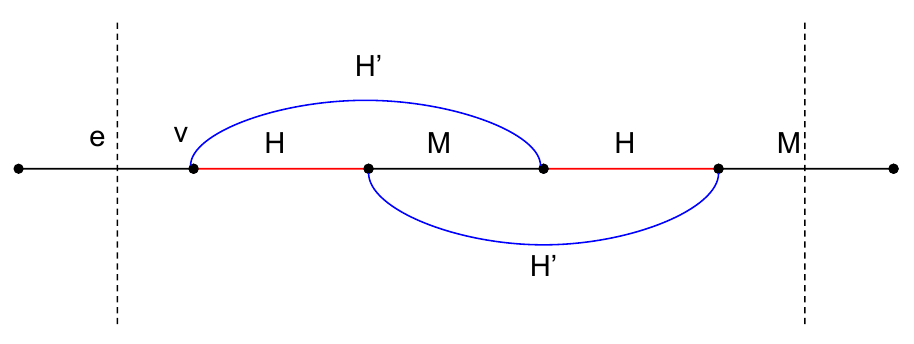}
             
                \caption{A diamond configuration if $H'$ is linked distance 2 away.}
                \label{fig:diamondpath}
\end{figure}
Notice that the $M$ edge to the right of this diamond cannot be in $H'$, and that any $H'$ or $H'$-$M$-$H'$ path crossing it would have to cross $e$ as well. Thus, we may repeat our argument for this new $M\backslash H'$ edge. Since $G$ is finite, we must eventually choose a spanner end on the right side. We extend this argument to the left side of $e$ by noting that any $H'$ edge going off of $P$ on the left cannot be incident to any $H'$-$M$ path off of $P$ on the right of $e$ by assumption.
\end{proof}

We may now prove \cref{admitting_diamond_spanners}, which we restate here for the reader's convenience.

\smallskip

\begin{namedthm*}{\cref{admitting_diamond_spanners}}
If a finite connected graph $G$ has $\frac{\mu(G)}{\nu(G)}<1$, then it
contains a diamond spanner as a subgraph.
\end{namedthm*}
\begin{proof}
Assume the same conditions and fix $P$ as in $\cref{barriers}$. Note that such an $M-H$ alternating chain must exist, or else $\mu(G)=\nu(G)$. Then $P$ contains some $M\backslash H'$ edge by (v) of \cref{augpaths}. Consider the leftmost of these edges $e_0$. The left vertex $v_0$ of $e_0$ must be incident to some $H'$ edge. If this edge does not connect $v_0$ to some vertex on the right of $v_0$ and is not part of an $H'$-$M$-$H'$ path connecting to the right of $v_0$, then \cref{barriers} implies the existence of a spanner or diamond spanner. Otherwise, we may assume it is an $H'$ edge connecting back to $P$, since we will derive contradictions by the illegal even cycles described in (vi) of \cref{augpaths}. This $H'$ edge connects to some vertex to the right of $v_0$, observe that if this $H'$ connects directly to the left of some $M$ edge that we have created an even cycle forbidden by \cref{augpaths}. This cycle is pictured in \cref{fig:illegal}.
\begin{figure}[H]
            \centering
		     \includegraphics[scale=0.7]{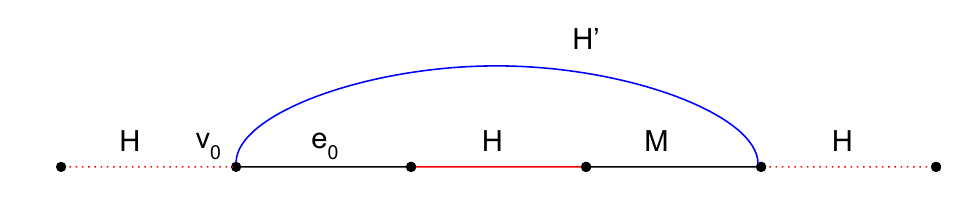}
		     \caption{An illegal even cycle.}
             \label{fig:illegal}
\end{figure}
This $H'$ edge must then connect to the left of some $M$ edge $e_1$ by a vertex $v_1$. Note $e_1$ cannot be in $H'$. If none of the vertices between $v_0$ and $v_1$ are linked by an $H'$ or $H'$-$M$-$H'$ path to some vertex on the right of $v_1$, then \cref{barriers} again gives us a spanner or diamond spanner. The barrier is drawn in \cref{fig:anotherbarrier}. 
\begin{figure}[H]
            \centering
		     \includegraphics[scale=0.7]{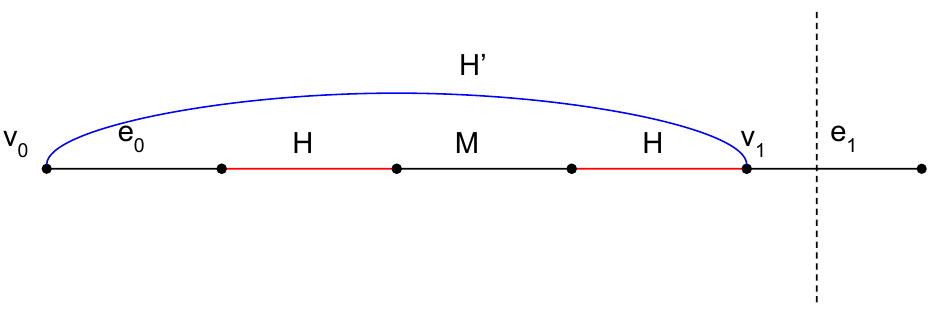}
             
                \caption{A barrier if no other vertices are connected to the right of $v_1$.}
                \label{fig:anotherbarrier}
\end{figure}
Suppose instead that one of these vertices is linked to the right of $v_1$. Choose the vertex between $v_0$ and $v_1$ that is connected by such a path that travels the furthest to the right. Call this vertex $v'_2$ and the vertex it is linked to $v_2$. If $v'_2$ is on the left of an $M$ edge and $v_2$ on the right of an $M$ edge, then the two paths between $v_2$ and $v_2'$ form another prohibited even cycle. If $v'_2$ is on the right of an $M$ edge and $v_2$ on the right of an $M$ edge, then the cycle connecting $v'_2$ to $v_2$ to $v_1$ to $v_0$ is also prohibited. This forbidden cycle is shown in \cref{fig:anotherbadcycle}.
\begin{figure}[H]
            \centering
		     \includegraphics[scale=0.5]{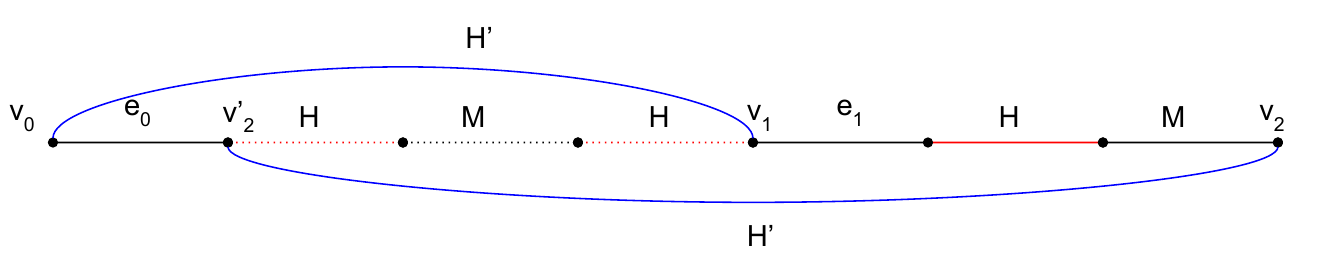}
		     \caption{An illegal cycle using all 4 marked vertices.}
             \label{fig:anotherbadcycle}
\end{figure}

Thus, $v_2$ must also be on the left of some $M\backslash H'$ edge $e_2$. By a similar argument to the case for $v_1$, if no vertex between $v_0$ and $v_2$ is linked to a vertex of $P$ on the right of $v_2$ by an $H'$ or $M$-$H'$-$M$ path, then \cref{barriers} applies. Thus, some vertex between $v_0$ and $v_2$ must be linked to the right of $v_2$. In fact, by choice of $v_2$ being furthest away of all linked vertices from those between $v_0$ and $v_1$, we may restrict out attention to the vertices between $v_1$ and $v_2$. Among these, again choose the vertex $v_3'$ that is linked the furthest to the right to some vertex $v_3$, as shown in \cref{fig:longcross}.
\begin{figure}[H]
            \centering
		     \includegraphics[scale=0.45]{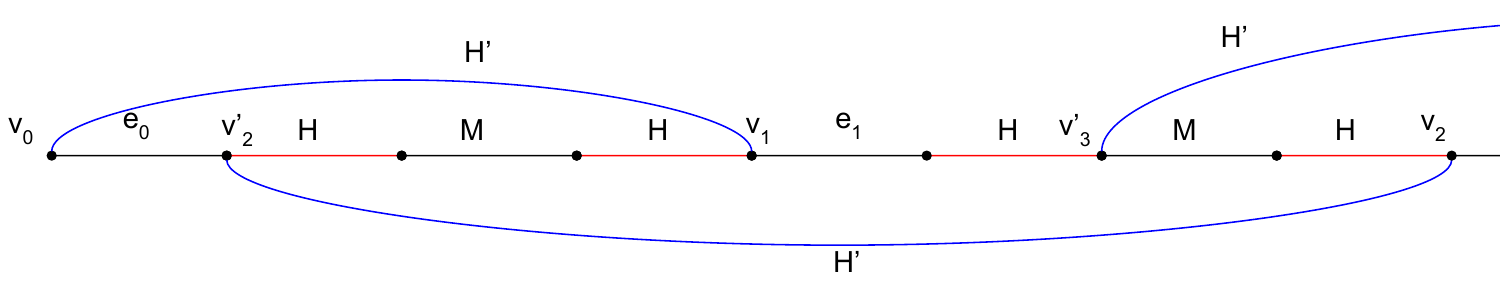}
             
                \caption{We continue our argument by finding another vertex $v_3'$ that is linked to the right on our path.}
                \label{fig:longcross}
\end{figure}

Again, if $v_3'$ is to the left of an $M$ edge and $v_3$ to the right, then these two paths between these two vertices form a forbidden even cycle. If instead $v_3'$ is on the right of an $M$ edge and $v_3$ on the right, then these two vertices are part of another forbidden even cycle, regardless of whether $v_2'$ is on the left or right of an $M$ edge. Thus, $v_3$ is on the left of an $M$ edge $e_3$. We may continue arguing in this manner, but observe that there are only finitely many $M\backslash H'$ edges in $P$, so eventually, we will not be able to cross some $M\backslash H'$ edge $e_n$ without creating a prohibited cycle. No vertex to the right of $e_n$ can be linked to a vertex to the left since our $H'$ and $H'$-$M$-$H'$ paths were chosen to reach the furthest. Then $e_n$ witnesses a spanner or diamond spanner by \cref{barriers}.
\end{proof}

\appendix
\section{Characterization of Graphs Achieving Ratio $\frac{4}{5}$}
\label{app:char}

For the reader's convenience, here we provide a short exposition of Tserunyan's characterization \cite{Tserunyan:2009qr} of exactly when the lower bound $\frac{4}{5}$ is achieved by a connected graph.

Let $S$ denote the spanner as a concrete graph, and note that $S$ is a tree. Call
a vertex of the spanner a $j$-vertex if it has degree $j$. For a $1$-vertex or
$2$-vertex $v$, call the nearest $3$-vertex the \emph{base} of $v$.
Call an edge between an $i$-vertex and a $j$-vertex an $i-j$ edge.
Define two sets associated to the spanner $S$:
\begin{align*}
    U(S) &:= \{e \in E(G) \ :\ \text{$e$ is incident to a $1$-vertex} \}\\
    L(S) &:= \{e \in E(G)\setminus U(S) \ :\ \text{$e$ is incident to a
    $2$-vertex} \}
\end{align*}
For an arbitrary graph $G$,
call a spanning subgraph where every connected component is isomorphic to $S$
an $S$-forest.
Let $G$ admit an $S$-forest $F$. Denote the connected
components $S_1, S_2, \ldots, S_k$, and define
\begin{align*}
    U(F) &:= \bigcup_{j=1}^k U(S_j)\\
    L(F) &:= \bigcup_{j=1}^k L(S_j)\\
    \Delta(G, F) &:= \{e \in E(G) \ :\ \text{$e$ connects a $1$-vertex to its base}\}\\
    B(G,F) &:= E(G) \setminus \Big ( U(F) \cup L(F) \cup \Delta(G,F) \Big ).
\end{align*}
Note that $U, L$ are subsets of $E(F)$ whereas $\Delta, B$ are subsets of $E(G)$.

Call a non-repeating sequence of edges $(e_1, e_2, \ldots e_k)$ such that
$e_i$ is adjacent to $e_{i+1}$ for all $i$ a \emph{trail}.
If $u_{i-1}, u_i$ are the vertices incident to $e_i$, call a trail
\emph{closed} if $u_0 = u_k$.
For sets $A,B \subseteq E(G)$, say a trail $T = (e_1, e_2, \ldots, e_k)$ is
$A,B$-alternating if the edges of even index belong to $A \setminus B$ and those of
odd index belong to $B \setminus A$ or vice-versa.
\begin{theorem}
A connected graph $G$ has ratio $\frac{4}{5}$ if and only if $G$ admits a spanning
$S$-forest $F$ such that
\begin{enumerate}
    \item $1$-vertices of $F$ are not incident to any edge from $B(G,F)$
    \item if $u$ is a $1$-vertex and is incident to an edge of $\Delta(G,F)$, then
    the $2$-vertex adjacent to $u$ is not incident to any edges of $B(G,F)$
    \item for any $L(F), B(G,F)$-alternating closed trail $C$ containing a $2-2$
    edge, we have $C \cap B(G,F)$ is not bipartite.
\end{enumerate}
\end{theorem}

\medskip

\subsubsection*{Acknowledgements}
We thank Vahan Mkrtchyan for his invaluable insight into this direction of research and for pointing out the relevant literature and context. We also thank Alexandr Kostochka for helpful suggestions and advice.

\bibliographystyle{ieeetr}
\bibliography{references}\vspace{0.75in}

\end{document}